\def\qed{\hfill $\vcenter{\hrule height .3mm
		\hbox {\vrule width .3mm height 2.1mm \kern 2mm \vrule width .3mm
			height 2.1mm} \hrule height .3mm}$ \bigskip}
\def \RR {\mathbb R}
\def \NN {\mathbb N}
\def \EE {\mathbb E}
\def \CC {\mathbb C}
\def \PP {\mathbb P}
\def \eps {\varepsilon}
\def \vphi {\varphi}
\def \DC {\mathcal{C}_n}
\def \CC {\overline{\mathcal{C}}_n}
\def \TTT {\mathcal{T}}
\def \NN {\mathcal{N}}
\def \Stab {\mathrm{Stab}}
\def \DC {\mathcal{C}_n}
\def \INF {\mathrm{Inf}}
\def \MAXINF {\mathrm{MaxInf}}
\def \CC {\overline{\mathcal{C}}_n}
\def \CK {\mathcal{S}}
\newtheorem{theorem}{Theorem}
\newtheorem{lemma}[theorem]{Lemma}
\newtheorem{fact}[theorem]{Fact}
\newtheorem{claim}[theorem]{Claim}
\newtheorem{conjecture}[theorem]{Conjecture}
\newtheorem{proposition}[theorem]{Proposition}
\newtheorem{corollary}[theorem]{Corollary}
\theoremstyle{definition}
\theoremstyle{remark}
\long\def\symbolfootnotetext[#1]#2{\begingroup
	\def\thefootnote{\fnsymbol{footnote}}\footnotetext[#1]{#2}\endgroup}
\newcommand{\dm}[1]{{\color{red}{[[{\bf Dan:} #1]]}}}
\begin{document}
    \title{Noise stability on the Boolean hypercube via a renormalized Brownian motion}
	\author{Ronen Eldan\thanks{Microsoft Research and Weizmann Institute},~ Dan Mikulincer\thanks{MIT, supported by a Vannevar Bush Faculty Fellowship ONR-N00014-20-1-2826}~~and Prasad Raghavendra\thanks{UC Berkeley}}
    \maketitle
	\begin{abstract}
		We consider a variant of the classical notion of noise on the Boolean hypercube which gives rise to a new approach to inequalities regarding noise stability.  We use this approach to give a new proof of the Majority is Stablest theorem by Mossel, O'Donnell, and Oleszkiewicz, improving the dependence of the bound on the maximal influence of the function from logarithmic to polynomial. We also show that a variant of the conjecture by Courtade and Kumar regarding the most informative Boolean function, where the classical noise is replaced by our notion, holds true. Our approach is based on a stochastic construction that we call the renormalized Brownian motion, which facilitates the use of inequalities in Gaussian space in the analysis of Boolean functions.
	\end{abstract}
	\section{Introduction}
	Consider the discrete hypercube $\DC = \{-1,1\}^n$ equipped with the uniform measure $\mu^n$. Numerous inequalities concerning functions $f:\DC \to \RR$ involve the \emph{noise} operator, defined as follows: For each $x \in \DC$ and $\rho \in [0,1]$ let $\NN_{\rho,x}$ be the unique random variable taking values in $\DC$ whose coordinates are independent and such that $\EE[\NN_{\rho,x}] = \rho x$. We define the noise operator $T_\rho$ by the equation
	\begin{equation} \label{eq:defnoise}
		T_\rho[f](x) := \EE[f(\NN_{\rho,x})], ~~ \forall x \in \{-1,1\}^n.
	\end{equation}
	The quadratic form associated with this operator is sometimes called the \emph{noise stability} of $f$, denoted by
	$$
	\Stab_\rho(f) := \int_{\DC} f T_\rho[f] d \mu^n.
	$$
    We will mainly be interested in Boolean functions, $f: \DC \to \{0,1\}$. If $f$ is a Boolean function which satisfies $\EE[f] = \frac{1}{2}$, we will say that $f$ is balanced. For Booelan functions, noise stability is a canonical notion with far-reaching applications in social choice theory and theoretical computer science. 
    
	It is easy to check that the operator $T_\rho$ is diagonal with respect to the basis of characteristic functions of the form $\chi_A(x) := \prod_{i \in A} x_i$ and attains its spectral gap on functions of the form $x \to \mathbf{1}\{x_i > 0 \}$, called \emph{dictator} functions. This immediately implies the following fact.
	\begin{fact} \label{fact:stab}
		Among all balanced functions $f:\{-1,1\}^n \to [0,1]$, $\Stab_\rho(f)$ is maximized by the dictator function. 
	\end{fact}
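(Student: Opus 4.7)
The plan is to expand $f$ in the Fourier–Walsh basis $\{\chi_A\}_{A \subseteq [n]}$ and exploit the explicit spectral decomposition of $T_\rho$. Since $T_\rho \chi_A = \rho^{|A|} \chi_A$, Parseval gives
\[
\Stab_\rho(f) = \sum_{A \subseteq [n]} \rho^{|A|}\,\hat f(A)^2.
\]
The balancedness assumption $\EE[f]=\tfrac12$ pins down the constant coefficient as $\hat f(\emptyset)=\tfrac12$, contributing $\tfrac14$ to the stability regardless of the rest of $f$, so the task reduces to bounding $\sum_{A\neq\emptyset}\rho^{|A|}\hat f(A)^2$.

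Next I would use the range constraint $f:\DC\to[0,1]$, which is where the hypothesis actually bites. Since $0\le f\le 1$ pointwise we have $f^2\le f$, hence
\[
\sum_{A}\hat f(A)^2 = \EE[f^2] \le \EE[f] = \tfrac12,
\]
so that $\sum_{A\neq\emptyset}\hat f(A)^2 \le \tfrac14$. Bounding each weight by its largest value $\rho^{|A|}\le\rho$ for $|A|\ge 1$ yields
\[
\Stab_\rho(f) \le \tfrac14 + \rho\sum_{A\neq\emptyset}\hat f(A)^2 \le \tfrac14+\tfrac{\rho}{4} = \tfrac{1+\rho}{4}.
\]

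Finally I would verify that this bound is attained by a dictator: for $f(x)=\mathbf{1}\{x_i>0\}=\tfrac{1+x_i}{2}$ the only nonzero Fourier coefficients are $\hat f(\emptyset)=\hat f(\{i\})=\tfrac12$, giving $\Stab_\rho(f)=\tfrac14+\rho\cdot\tfrac14=\tfrac{1+\rho}{4}$, matching the upper bound. There is essentially no obstacle here; the only conceptual point worth flagging is that the relaxation of the codomain from $\{0,1\}$ to $[0,1]$ is used exactly once, in the inequality $\EE[f^2]\le\EE[f]$, and the proof would fail without a constraint of this form (e.g.\ for $f\in\{-1,1\}$ one instead normalizes using $f^2\equiv 1$).
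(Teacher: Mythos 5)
Your proof is correct and is exactly the argument the paper has in mind: it diagonalizes $T_\rho$ in the Fourier--Walsh basis, notes the spectral gap $\rho$ is attained on degree-one functions, and uses $\EE[f^2]\le\EE[f]$ to bound the non-constant Fourier weight by $\tfrac14$. The paper simply leaves this verification to the reader ("It is easy to check\dots This immediately implies the following fact"), so your write-up fills in precisely the intended details.
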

	In this paper we are concerned with two important variants of the above fact. The first one is a deep theorem by Mossel, O'Donnell, and Oleszkiewicz \cite{MOO}, called the \emph{majority is stablest} theorem. It concerns the case that the function $f$ has small influences. Define
	$$
	\INF_i(f) := \int_{\DC} |f(x) - f(x^{\sim i})| d \mu^n(x), 
	$$
	as the $i$-th influence of $f$, where $x^{\sim i}$ stands for $x$ with the $i^{\mathrm{th}}$ bit flipped. Then, under the assumption
	$$
	\MAXINF(f) := \max_{i \in [n]} \INF_i(f) = o(1),
	$$
	we have that the \emph{majority function}, rather than the dictator, maximizes noise stability, up to a small error. 
	More precisely, define $\mathrm{Maj}_n:\DC \to \{0,1\}$, by $\mathrm{Maj}_n(x) = \mathbf{1} \bigl \{\sum\limits_{i=1}^n x_i \geq 0 \bigr \}.$ A straightforward application of the central limit theorem coupled with a computation in Gaussian space shows that,
	$$\lim\limits_{n\to \infty}\Stab_\rho(\mathrm{Maj}_n) = \frac{1}{4} + \frac{1}{2\pi} \arcsin(\rho).$$ It is also not hard to verify that $\MAXINF(\mathrm{Maj}_n) \xrightarrow{n \to \infty} 0$.
	With this in mind, the \emph{majority is stablest} theorem reads:
	\begin{theorem} (Majority is stablest, \cite{MOO}) \label{thm:moo}
		For every $\rho \in [0,1]$ and $\delta > 0$ there exists $\tau > 0$ such that for every balanced Boolean function $g:\DC \to \{0,1\}$ satisfying $\MAXINF(g) \leq \tau$, one has
		$$
		\Stab_\rho(g) \leq \frac{1}{4} + \frac{1}{2\pi} \arcsin(\rho) + \delta.
		$$
	\end{theorem}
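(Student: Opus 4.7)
The plan is to replace the invariance-principle step at the heart of MOO's original argument (which incurs a logarithmic loss via Berry--Esseen-type bounds combined with hypercontractivity) by a coupling based on the renormalized Brownian motion alluded to in the introduction. This should allow Borell's Gaussian noise-stability inequality to be applied pointwise along the trajectories of a stochastic process directly associated to the Boolean function $g$, thereby yielding an error that is polynomial rather than logarithmic in $\MAXINF(g)$.

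Concretely, I would construct a pair of coupled renormalized Brownian motions $(X_t, Y_t)_{t \geq 0}$ in $\RR^n$ such that (i) each of $X_t, Y_t$ is a martingale with respect to a common filtration $\cF_t$, (ii) the marginals $X_\infty, Y_\infty$ are both uniformly distributed on $\DC$, and (iii) the joint law of $(X_\infty, Y_\infty)$ realizes the $\rho$-noisy coupling underlying $\Stab_\rho$. For the balanced Boolean function $g$, consider the Doob martingales $G^X_t := \EE[g(X_\infty)\mid \cF_t]$ and $G^Y_t := \EE[g(Y_\infty)\mid \cF_t]$. By orthogonality of martingale increments,
$$ \Stab_\rho(g) - \EE[g]^2 \;=\; \EE\!\left[\int_0^\infty d\langle G^X, G^Y \rangle_t\right]. $$

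The central step is to bound the integrand $d\langle G^X, G^Y \rangle_t$ pointwise. The renormalized Brownian motion is engineered so that its infinitesimal increments look Gaussian, so that at each time $t$ the covariation density should be expressible as $\rho$ times an inner product of appropriately defined ``gradients'' of the conditional expectations $G^X_t, G^Y_t$. Applying Borell's isoperimetric inequality at each fixed $t$ to these Gaussian-like increments reduces the question at each time to a one-dimensional Gaussian noise stability problem, whose extremum (achieved by half-spaces of measure $1/2$) equals $\tfrac{1}{4}+\tfrac{1}{2\pi}\arcsin(\rho)$. Integrating the pointwise bound back along the process gives precisely the desired right-hand side.

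The principal obstacle is making the Gaussian comparison quantitative with a polynomial (rather than logarithmic) error. This error arises from the discrepancy between genuine Gaussian increments of the renormalized Brownian motion and the actual increments, which should be controlled by higher-order derivatives of the multilinear extension of $g$. The low-influence assumption $\MAXINF(g)\leq \tau$ is used to bound these derivatives: heuristically, each coordinate contributes $O(\INF_i(g))$, and because no single $\INF_i(g)$ exceeds $\tau$ one can trade $L^\infty$ control for $L^2$ control via a Poincar\'e-type argument. Carefully tracking how these errors accumulate over the trajectory should produce an overall bound of the form $\delta = O(\tau^\alpha)$ for some explicit $\alpha>0$, yielding the theorem.
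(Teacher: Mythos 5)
Your high-level strategy---replace the invariance principle by a stochastic process attached to $g$ and compare against a Gaussian extremal object---is in the right spirit, but the two steps that carry all the difficulty are missing, and the architecture you propose makes them harder than necessary. The paper never builds a coupled pair $(X_t,Y_t)$ realizing the $\rho$-correlation at time infinity; instead it exploits the multilinear identity $\Stab_\nu(f)=\Stab_{\mathrm{Var}[\nu]}(f)$ of \eqref{eq:stabnu}, so that $\Stab_\rho(g)=\EE[g(X(t))^2]$ for a \emph{single} renormalized Brownian motion at time $t=\log\frac{1}{1-\rho}$: the time parameter itself plays the role of the noise parameter, and no cross-variation $\langle G^X,G^Y\rangle_t$ ever appears. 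In your version you would still have to construct the coupling (nontrivial, since the diffusion coefficient is state-dependent), and, more seriously, you give no mechanism by which a pointwise bound on $d\langle G^X,G^Y\rangle_t$ integrates to exactly $\frac14+\frac{1}{2\pi}\arcsin\rho$. A pointwise differential inequality cannot be integrated without knowing the law of the martingale along the way. The paper resolves this with a comparison principle: a model martingale $M_t$ satisfying $\frac{d}{dt}[M]_t=I(M_t)^2$, whose second moment is exactly the Gaussian noise stability $\Lambda_{1-e^{-t}}(M_0)$; a Dambis--Dubins--Schwartz time change writing both $M_t$ and $N_t=g(X(t))$ as one Brownian motion run at different clocks; and a level-$1$ inequality $\|\sigma_t\nabla g(X(t))\|_2\le I(g(X(t)))+C\sqrt{\kappa_t}/I(g(X(t)))$ showing the Boolean clock runs no faster than the Gaussian one up to the influence error. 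Borell's inequality is not invoked at all; Gaussian extremality enters only through the isoperimetric profile $I$, and the level-$1$ inequality is proved by a rearrangement lemma plus a quantitative CLT in Wasserstein-$2$ distance, not by Taylor expansion in higher derivatives of the multilinear extension.

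The second gap is your treatment of the error term. The quantity that must be small is $\max_i|\partial_i g(X(s))|$ uniformly over $s\in[0,t]$ and over $i$, with high probability along the path, whereas $\MAXINF(g)\le\tau$ only controls the average of $|\partial_i g|^2$ at the terminal distribution; a Poincar\'e-type argument does not bridge this. The paper needs (i) a hypercontractive inequality for the process $X(t)$ to pass from second to $(2+e^{-t})$-th moments of $\partial_i g(X(t))$, (ii) Doob's maximal inequality applied to the submartingales $|\partial_i g(X(s))|^{2+e^{-t}}$, and (iii) crucially, a pre-mollification $g\mapsto g_\eps=T_{e^{-\eps}}g$ so that the union bound over $i$ is summable (for a raw Boolean function $\sum_i\EE|\partial_i g|^2$ can be of order $n$); the auxiliary noise $\eps$ is then traded off against the main error via a Lipschitz bound on $\rho\mapsto\Lambda_\rho(a)$. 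None of these ingredients appear in your outline, and without them the ``integrate the errors along the trajectory'' step cannot be closed, let alone with a polynomial dependence on $\tau$.
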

	Here, the condition $\MAXINF(g) = o(1)$ can be thought of as ruling out functions that are not genuinely high-dimensional. Theorem \ref{thm:moo} was first conjectured in \cite{khot2007optimal}; its resolution in \cite{MOO} has been widely influential, with many implications for social choice theory, complexity, and Boolean analysis (see \cite[Section 2.3.2]{MOO} for some prominent examples).
	
	Another variant of Fact \ref{fact:stab}, which can be thought of as an \emph{entropic version},  with which we are concerned is a conjecture by Courtade and Kumar \cite{CK14}. Recall that the relative entropy between two variables $X,Y$ is defined as
	$$
	\mathrm{I}(X;Y) = H(X) - H(X | Y),
	$$
	where $H(X)$ is the Shannon entropy. The conjecture states,
	\begin{conjecture} \label{conj:CK} (Courtade-Kumar) 
		Let $X$ be uniformly distributed on $\DC$. Then, among all Boolean functions $f:\DC \to \{0,1\}$, the function which maximizes the quantity $\mathrm{I}(X; f(\NN_{X,\rho}))$ is the dictator function.
	\end{conjecture}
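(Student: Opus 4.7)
The plan is to express $\mathrm{I}(X; f(\NN_{X,\rho}))$ as an entropy–dissipation functional along a martingale built from the renormalized Brownian motion and then to bound that functional tightly using the Boolean range of $f$. Throughout, set $Y = \NN_{X,\rho}$ and $g(x) = T_\rho f(x) = \PP(f(Y) = 1 \mid X = x)$.

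First, I would rewrite the mutual information as
$$
\mathrm{I}(X; f(Y)) = H(f(Y)) - H(f(Y) \mid X) = h_2(\EE[f]) - \EE[h_2(g(X))],
$$
where $h_2$ is the binary entropy function. For a dictator, $g(x) = (1+\rho x_i)/2$, so $h_2(g(x)) = h_2((1-\rho)/2)$ identically, giving the target value $1 - h_2((1-\rho)/2)$. The essential case of the conjecture is a balanced $f$ (unbalanced $f$ can be handled by a separate standard reduction using that $h_2(\EE[f])$ is the hard cap on $\mathrm{I}$). The heart of the matter is to prove that for every balanced Boolean $f$,
$$
\EE[h_2(g(X))] \geq h_2\bigl((1-\rho)/2\bigr),
$$
with equality at the dictator.

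Next, I would realize $g(X)$ as the terminal value of a $[0,1]$-valued martingale $(M_t)_{t \in [0,1]}$ constructed from the paper's renormalized Brownian motion: the construction is designed so that $M_0 = 1/2$, $M_1$ has the law of $g(X)$ under $X \sim \mu^n$, and the quadratic variation $d\langle M\rangle_t$ admits a coordinate-by-coordinate decomposition that tracks the influences of $f$. Since $h_2''(p) = -1/(p(1-p)\ln 2)$, Itô's formula gives
$$
\EE[h_2(M_1)] = 1 - \frac{1}{2\ln 2}\, \EE\!\left[\int_0^1 \frac{d\langle M\rangle_t}{M_t(1-M_t)}\right].
$$
The desired inequality thus translates into the uniform upper bound
$$
\EE\!\left[\int_0^1 \frac{d\langle M\rangle_t}{M_t(1-M_t)}\right] \leq 2\ln 2 \cdot \bigl(1 - h_2((1-\rho)/2)\bigr),
$$
which is saturated by the dictator (for which $(M_t)$ reduces to a single-coordinate diffusion governed entirely by one bit of $X$).

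The crux of the argument — and the step I expect to be the main obstacle — is establishing this upper bound for all Boolean $f$. Two features make it delicate. First, the hypothesis $f \in \{0,1\}$ forces $M_1 \in \{0,1\}$, so the denominator $M_t(1-M_t)$ degenerates at the terminal time and the Fisher-type integrand must be controlled up to the boundary, which is exactly where the Boolean rigidity must be exploited. Second, in contrast to the setting of the paper's main theorems, no Gaussian symmetrization is available to collapse the problem to one dimension, which is precisely why Conjecture~\ref{conj:CK} has remained open. My plan would be to prove a \emph{local} entropy-dissipation inequality that compares, at each infinitesimal step, the contribution of a single bit to $d\langle M\rangle_t / (M_t(1-M_t))$ with the corresponding drop in $h_2(M_t)$, with equality exactly along the dictator trajectory; summing this inequality over coordinates through the renormalized Brownian decomposition would then yield the required global bound. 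Designing the correct local inequality — one that genuinely uses both the discrete combinatorics preserved by the renormalized Brownian motion and the Boolean range constraint on $f$ — is the essential difficulty, and it is where a proof of the full discrete Courtade–Kumar conjecture must differ substantively from the Gaussian variant proved in this paper.
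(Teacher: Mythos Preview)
The paper does not prove Conjecture~\ref{conj:CK}; it is stated as an open problem. What the paper proves is Theorem~\ref{thm:CKvariant}, a variant in which the classical noise $\mu_\rho^n$ is replaced by the measure $\nu_t$ arising from the renormalized Brownian motion. The paper is explicit that this does \emph{not} resolve the original conjecture, and indeed remarks that the analogous statement for $\mu_\rho^n$ with general convex $\varphi$ is false. So there is no ``paper's own proof'' to compare against: you are attempting to prove an open problem.

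Your proposal is a plan, not a proof, and you acknowledge as much. The decisive step---your ``local entropy-dissipation inequality''---is left entirely unspecified, and this is exactly the obstruction that has kept the conjecture open. In the paper's proof of the variant, the corresponding step is Claim~\ref{claim:1}: the pointwise bound $\tfrac{d}{dt}[N]_t \le N_t(1-N_t)$, which follows from a biased Parseval inequality and is what singles out $\nu_t$ as special. There is no known analogue of this bound that would control the Fisher-type integral $\int d\langle M\rangle_t/\bigl(M_t(1-M_t)\bigr)$ for a martingale terminating at $T_\rho f(X)$ under the \emph{classical} noise, and nothing in your outline supplies one.

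There is also an internal inconsistency. You set $M_1$ to have the law of $g(X)=T_\rho f(X)$, which for generic Boolean $f$ takes values strictly inside $(0,1)$; yet you then assert that ``$f\in\{0,1\}$ forces $M_1\in\{0,1\}$'' and worry about boundary degeneration at the terminal time. These cannot both hold for the same martingale. If instead you run the process to $t=\infty$ so that the terminal value is $f(X(\infty))\in\{0,1\}$, then the It\^o identity you wrote no longer isolates $\EE[h_2(g(X))]$ at a finite time, and the link to $\mathrm{I}(X;f(\NN_{X,\rho}))$ is lost. Resolving this mismatch is not cosmetic: it is another manifestation of why the renormalized-Brownian-motion machinery proves the $\nu_t$-variant cleanly but does not, as the paper itself notes, reach the classical Courtade--Kumar conjecture.
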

	Since its formulation, Conjecture \ref{conj:CK} has attracted the attention of many researchers from different disciplines. Several works, \cite{ordentlich2016improved,samorodnitsky2016entropy,yang2019most}, have managed to prove the conjecture in different noise regimes. However, a resolution of the conjecture, in full generality, remains elusive. The reader is referred to \cite{kindler2015remarks} for further details on the history of the conjecture and a discussion of the main difficulties.
	\subsection*{A new notion of noise}
	Our main contribution in this paper is the introduction of an alternative notion of noise, which is, on one hand, slightly different from the one given by the operator $T_\rho$, but on the other hand, can be used in order to derive bounds on the (usual) noise stability. We use this new notion of noise towards two main results:
	\begin{itemize}
		\item
		We give a new proof of the Majority is stablest theorem for the standard notion of noise.  Our proof yields an improved error bound in that it is polynomial in the maximal influence rather than logarithmic.
		\item
		We show that the variant of the Courtade-Kumar conjecture, which is obtained by replacing the classical notion of noise with ours, holds true. In fact, a more general statement holds where the Shannon (logarithmic) entropy can be replaced by an entropy with respect to any convex function.
	\end{itemize}

        Our notion of noise will be constructed as a martingale solution to a stochastic differential equation (SDE), driven by a Brownian motion. The main benefit of this approach is that it allows us to apply tools and techniques from stochastic analysis to analyze the process as a continuous function of the noise level. In particular, we perform a path-wise analysis of the process, which allows for finer manipulations than the analysis of the expectations in \eqref{eq:altnoise1}.

        Recently, similar techniques have proven to be quite useful in related settings. The paper \cite{eldan2018gaussian} used a Brownian motion, constrained to the cube, to study large deviations, and \cite{eldan2022concentration} as well as \cite{eldan2022log} constructed martingales to prove new concentration inequalities. In parallel to this work and relevant to the present setting, \cite{eldan2022optimal} employed a discrete process to prove a sharp version of the ``it ain't over till it's over'' theorem. Our technique is, in some sense, a continuation of these works, and we expand the use of stochastic analytic tools in the Boolean hypercube.
	\subsubsection*{Multilinear extensions and classical noise}
	To better understand our generalization of the notion of noise, let us first consider a slightly different point of view on the classical noise operator $T_\rho$. We first want to consider the \emph{multilinear extension} of a function $f:\DC \to \RR$. It is well-known that every such function $f$ can be uniquely written in the form
	$$
	f(x) = \sum_{A \subset [n]} \hat f(A) \chi_A,
	$$
	where $\chi_A(x) := \prod_{i \in A} x_i$, and the coefficients are obtained by the formula $\hat f(A) = \int f \chi_Ad\mu^n$. This form suggests that there is a natural way to extend the function $f$ from the discrete hypercube $\{-1,1\}^n$ to the continuous hypercube $[-1,1]^n$, simply by evaluating the above expression for $x_i$ taking values in $[-1,1]$. In what comes next, given a function $f:\DC \to \RR$, we will allow ourselves to also use the notation $f$ for the corresponding function defined on the continuous cube $\CC := [-1,1]^n$.
	
	Given this definition, it is now straightforward to check the identity
	$$
	T_\rho[f](x) = f\bigl (\rho x \bigr).
	$$
	In other words, we can replace the random point $\NN_{x, \rho}$ (which we think of as a noisy version of the point $x$) with the \emph{deterministic} point $\rho x$. 
	
	In light of this, we define $\mu^n_\rho$ to be the uniform measure on the set $\{-\sqrt{\rho}, \sqrt{\rho}\}^n = \sqrt{\rho}\DC$. Using this definition, we observe that
	\begin{equation}\label{eq:nsalt}
		\Stab_{\rho}(f) = \int_{\DC} f(x) f(\rho x) d \mu^n(x) = \int_{\sqrt{\rho}\DC} f(x)^2 d \mu^n_\rho(x).	
	\end{equation}
	Moreover, observe that
	\begin{equation} \label{eq:CKalt}
		\mathrm{I}(X; f(\NN_{x,\rho})) =  \int_{\DC} h(f(\rho x)) d \mu^n(x) - h\left(\int_{\DC} f(x) d\mu^n(x)\right),
	\end{equation}
	where $h(x) = x \log x + (1-x) \log(1-x)$ is the negative of the binary entropy function. This implies that Conjecture \ref{conj:CK} is equivalent to the fact that the quantity $\int h(f(x)) d \mu_\rho^n(x) - h\left(\int f(x) d\mu_\rho^n(x)\right)$ is maximized by dictator functions.
	
	\subsubsection*{Generalization of the notion of noise}
	This point of view, and in particular equation \eqref{eq:nsalt}, leads to the following generalization of the notion of noise: Given a symmetric probability measure $\nu$ on $[-1,1]$, we consider its $n$-fold tensor power, $\nu^n$, a measure on $\CC$, and define
	\begin{equation} \label{eq:altnoise1}
		\Stab_{\nu}(f) = \int_{\CC} f(x)^2 d \nu^{n}(x),
	\end{equation}
	so that, by \eqref{eq:nsalt}, we have $\Stab_\rho(f) = \Stab_{\mu^n_\rho}(f)$. 
	
	With this definition, we also get that
	\begin{align}
		\Stab_\nu(f) & = \int_{\CC} f(x)^2 d \nu^{n}(x) \nonumber \\
		& = \int_{\CC} \left (\sum_{A \subset [n]} \hat f(A) \chi_A(x) \right )^2 d \nu^{ n}(x) \nonumber \\
		& = \int_{\CC} \sum_{A,A' \subset [n]} \hat f(A) \hat f(A') \prod_{i \in A} \prod_{j \in A'} x_i x_j d \nu^{ n}(x) \nonumber\\
		& = \sum_{A \subset [n]} \hat f(A)^2 \left (\int x^2 d \nu (x) \right )^{|A|} = \Stab_{\mathrm{Var}[\nu]}(f). \label{eq:stabnu}
	\end{align}
	Due to this identity, inequalities regarding the classical noise stability can be alternatively proven for the quantity $\Stab_\nu(f)$ as long as $\nu$ is chosen with the correct variance.
	
	Next, we consider a more general notion of noise stability as follows: Let $\varphi:[0,1] \to \RR$ be convex and define
	\begin{equation} \label{eq:altnoise2}
		\CK_{\varphi,\nu} (f) = \int_{\CC} \varphi(f(x)) d \nu^{n}(x) - \varphi \left (\int_{\CC} f d \nu^n \right ).
	\end{equation}
	Using this definition, Fact \ref{fact:stab} is equivalent to the fact that $\CK_{x \to x^2, \mu^n_\rho}$ is maximized for the dictator function, and Conjecture \ref{conj:CK} is equivalent to the statement that $\CK_{h,\mu^n_\rho}$ is maximized for the dictator function, when $h$ is the binary entropy function.
	
	\subsubsection*{Our results}
	In this work, we will construct a family of measures $\nu_t$ for which extremality of the dictator and majority functions seems to arise naturally. We will first show the following variant of Conjecture \ref{conj:CK}:
	\begin{theorem} \label{thm:CKvariant}
		Let $\varphi:[0,1] \to \RR$ be convex and $t \in (0,\infty)$. Among all Boolean functions $f:\DC \to \{0,1\}$, the function which maximizes the quantity $\CK_{\varphi, \nu_t}$ is the dictator function.
	\end{theorem}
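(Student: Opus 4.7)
The plan is to apply It\^o's formula to $\varphi$ composed with the multilinear extension of $f$, evaluated along the renormalized Brownian motion $X_t = (X_t^{(1)}, \ldots, X_t^{(n)})$ used to define $\nu_t$. By construction $X_t$ is a continuous martingale with values in $\CC$, whose coordinates are independent scalar processes each marginally distributed as $\nu_t$, with $X_0 = 0$. Identify $f$ with its multilinear extension, which is $[0,1]$-valued on $\CC$ since the extension is a convex combination of the vertex values of $f$. Multilinearity together with independence of the coordinates makes $M_t := f(X_t)$ a bounded martingale with $M_0 = \hat f(\emptyset) = \int f\, d\nu_t^n$ (the symmetry of $\nu_t$ annihilates every nonempty Fourier character). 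Applying It\^o to $\varphi(M_t)$ and taking expectations yields
\begin{equation*}
\CK_{\varphi, \nu_t}(f) \;=\; \EE[\varphi(M_t)] - \varphi(M_0) \;=\; \tfrac{1}{2}\, \EE\!\left[\int_0^t \varphi''(f(X_s))\sum_{i=1}^n (\partial_i f(X_s))^2\, \sigma^2(X_s^{(i)}, s)\, ds\right],
\end{equation*}
where $\sigma^2(y, s)$ denotes the instantaneous quadratic variation rate of a scalar coordinate. For the dictator $g(x) = (1 + x_1)/2$ only the $i = 1$ summand survives with $\partial_1 g \equiv \tfrac{1}{2}$, giving $\CK_{\varphi,\nu_t}(g) = \tfrac{1}{8} \int_0^t \EE[\varphi''((1+X_s^{(1)})/2)\, \sigma^2(X_s^{(1)}, s)]\, ds$.

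The next step is to condition on $X_s^{(\neq i)}$ inside each summand. Since $\partial_i f$ does not depend on the $i$-th variable and $X_s^{(i)}$ is independent of $X_s^{(\neq i)}$, we write $f(X_s) = p_i + q_i X_s^{(i)}$ with $p_i := \EE[f(X_s) \mid X_s^{(\neq i)}]$ and $q_i := \partial_i f(X_s^{(\neq i)})$, and the representation reorganizes to
\begin{equation*}
\CK_{\varphi,\nu_t}(f) \;=\; \tfrac{1}{2} \int_0^t \sum_{i=1}^n \EE\!\left[q_i^2\, R(p_i, q_i, s)\right] ds, \qquad R(p, q, s) := \EE_{Y \sim \nu_s}\!\left[\varphi''(p + qY)\, \sigma^2(Y, s)\right].
\end{equation*}
The Boolean hypothesis enters as the pointwise constraint $|q_i| \leq \min(p_i, 1 - p_i)$, valid at every $X_s^{(\neq i)} \in [-1,1]^{n-1}$, because $f(\pm 1, \cdot) \in \{0, 1\}$ at the vertices forces $p_i \pm q_i \in [0, 1]$ for the multilinear extension on $\CC$.

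The proof is thus reduced to showing that at every $s \in [0, t]$,
\begin{equation*}
\sum_{i=1}^n \EE\!\left[q_i^2\, R(p_i, q_i, s)\right] \;\leq\; \tfrac{1}{4}\, R\!\left(\tfrac{1}{2}, \tfrac{1}{2}, s\right),
\end{equation*}
for every Boolean $f$ and every convex $\varphi$. This single-$s$ inequality is the main obstacle, and it is here that the explicit choice of $\nu_s$ and $\sigma(\cdot, s)$ of the renormalized Brownian motion must be used essentially: the whole point of the renormalization should be to arrange that, among all affine maps $y \mapsto p + qy$ sending $\nu_s$ into $[0,1]$, the dictator profile $y \mapsto (1 + y)/2$ is $R$-extremal, and that the Boolean admissibility region $|q| \leq \min(p, 1-p)$ combined with a Poincar\'e-type identity for $\sum_i q_i^2$ aggregates correctly to the dictator's contribution. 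As a sanity check, the case $\varphi(x) = x^2$ makes $R$ independent of $(p,q)$, and the displayed inequality collapses (via \eqref{eq:stabnu} and $\rho^{|A|} \leq \rho$) to Fact~\ref{fact:stab}, so the strategy is at least consistent with the $L^2$ theory. Once the single-coordinate comparison is in hand, integrating in $s$ yields Theorem~\ref{thm:CKvariant}.
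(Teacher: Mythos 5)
Your Itô set-up is fine as far as it goes, but the proof has a genuine gap exactly where you flag ``the main obstacle'': you reduce the theorem to a \emph{time-local} inequality, namely that for every fixed $s$ the $\varphi''$-weighted instantaneous rate $\EE\bigl[\varphi''(f(X_s))\,\|\sigma_s\nabla f(X_s)\|_2^2\bigr]$ is dominated by the dictator's, and you neither prove this nor is it likely to be the right statement to aim for. The issue is that at a fixed time $s$ the laws of $f(X_s)$ and of the dictator's value $(1+X_s^{(1)})/2$ are different, so even though the unweighted rates are ordered (this is the case $\varphi(x)=x^2$, which is why your sanity check passes), the rates weighted by $\varphi''$ evaluated along each process need not be ordered at every $s$. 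The theorem only asserts $\EE[\varphi(N_t)]\le\EE[\varphi(M_t)]$ for all $t$, which does not imply the ordering of the $t$-derivatives at every $t$; your reduction asks for a strictly stronger monotonicity that would have to be established for all convex $\varphi$ simultaneously, and no mechanism for doing so is given. Your coordinatewise constraint $|q_i|\le\min(p_i,1-p_i)$ is also too weak on its own: what is actually needed is the aggregated bound $\sum_i(1-x_i^2)(\partial_i f(x))^2\le f(x)(1-f(x))$, a Parseval-type inequality in the biased Fourier basis (with equality for dictators), which is the real pointwise input.

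The paper's route avoids the time-local comparison entirely and never differentiates $\varphi$. It proves the Parseval bound just mentioned (Claim \ref{claim:1}), so that the dictator martingale satisfies $\frac{d}{dt}[M]_t=M_t(1-M_t)$ exactly while a general Boolean $g$ gives $\frac{d}{dt}[N]_t\le N_t(1-N_t)$; it then realizes both martingales as time changes $W_{[M]_t}$, $W_{[N]_t}$ of a \emph{common} Brownian motion via Dambis--Dubins--Schwartz. Because the clock rates are compared at matched \emph{values} $W_\tau$ rather than at matched times, one gets the pathwise ordering $[N]_{t_0}\le[M]_{t_0}$ of stopping times of $W$, and a single application of Jensen's inequality (optional stopping) yields $\EE[\varphi(W_{[N]_{t_0}})]\le\EE[\varphi(W_{[M]_{t_0}})]$ for every convex $\varphi$ at once. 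If you want to salvage your approach, you would need to replace the fixed-$s$ comparison by a comparison at matched values of the processes; that is precisely what the coupling accomplishes.
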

    Let us emphasize that Theorem \ref{thm:CKvariant} does not imply Conjecture \ref{conj:CK}, since, in line with the above discussion and as will become immediately apparent, $\nu_t \neq \mu^n_\rho$, for any $t \in (0,\infty)$ and $\rho \in (0,1)$. Moreover, the fact that Theorem \ref{thm:CKvariant} deals with a different noise model has one striking consequence, it holds for arbitrary convex functions. The analogous result, in the setting of Conjecture \ref{conj:CK}, is known to be false. For example (see \cite{ordentlich2016improved}), if $\vphi(x) = x^k$ for some $k \gg 2^n$, and $f$ is the dictator function, then 
    $$\CK_{x \to x^k, \mu_p^n}(\mathrm{Maj}_n) > \CK_{x \to x^k, \mu_p^n}(f).$$ 
    In this regard, the measures $\nu_t$ can be viewed as a natural family of measures for which dictators maximize all generalized notions of noise stability simultaneously.
    This property is reminiscent of results in Gaussian space, proven in \cite{kindler2015remarks}. In particular, \cite[Theorem 5.1]{kindler2015remarks} is another variant of Conjecture \ref{conj:CK}, with respect to standard Gaussian and the Ornstein–Uhlenbeck semigroup, and which holds for arbitrary convex functions.
	
    While $\nu_t$ is an alternative noise model, as discussed above, identity \eqref{eq:altnoise1} allows transferring noise stability bounds, with respect to $\nu_t$, to the classical noise model, which leads us to our next result. Our second goal in this work will be to give an upper bound for $\Stab_{\nu_t}(g)$ given an upper bound on $\MAXINF(g)$. This will imply a quantitative strengthening of Theorem \ref{thm:moo}. We prove our result for non-balanced functions. In this case, the quantity $\frac{1}{4} + \frac{1}{2\pi} \arcsin(\rho)$ in Theorem \ref{thm:moo} should be replaced by the Gaussian noise stability $\Lambda_\rho(\EE\left[g\right])$. This function is defined by,
	\begin{equation} \label{eq:gaussnoise}
		\Lambda_\rho(a):= \PP\left(G_1\leq \Phi^{-1}(a) \text{ and } G_2\leq \Phi^{-1}(a)\right),
	\end{equation}
	where $G_1,G_2$ are standard Gaussians on $\RR$ with $\EE\left[G_1G_2\right] = \rho$, and, for $a \in (0,1)$, $\Phi^{-1}(a)$ is the unique number satisfying $\PP\left(G_1 \geq a\right) = \Phi^{-1}(a)$.\\
    
     With the above definition, we show:
	\begin{theorem} \label{thm:majoritymain}
		Let $\rho \in [0,1]$ and let $g: \DC \to \{0,1\}$, with $\MAXINF(g) \leq \kappa$, for some $\kappa \geq 0$. Then,
		\begin{align*}
			\mathrm{Stab}_{\rho}(g) \leq \Lambda_\rho(\EE\left[g\right]) +\frac{C}{1-\sqrt{\rho}}\kappa^{\frac{1- \rho}{27}},
		\end{align*}
		for some numerical constant $C > 0$.
	\end{theorem}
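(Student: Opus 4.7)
The plan is to first establish the analogue of Theorem \ref{thm:majoritymain} for the renormalized noise $\Stab_{\nu_t}$ and then transfer it to the classical noise $\Stab_\rho$ via the variance-matching identity \eqref{eq:stabnu}. Concretely, picking $t=t(\rho)$ so that the one-dimensional marginals of $\nu_t$ have variance $\rho$ gives $\Stab_\rho(g) = \Stab_{\nu_t}(g)$, so it suffices to prove
\[
\Stab_{\nu_t}(g) \;\le\; \Lambda_\rho(\EE[g]) + \frac{C}{1-\sqrt{\rho}}\,\kappa^{(1-\rho)/27}.
\]

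I would then write $\Stab_{\nu_t}(g) = \EE[f(X_t)^2]$, where $f$ is the multilinear extension of $g$ and $X_t$ is the renormalized Brownian motion defining $\nu_t$. Since $X_t$ is a martingale, so is $f(X_t)$; applying It\^o's formula to $f(X_s)^2$ expresses $\EE[f(X_t)^2]$ as $\EE[f(X_0)^2]$ plus an integral along the trajectory of a quadratic form in $\nabla f(X_s)$ weighted by the instantaneous quadratic variation of $X_s$. The next step is to compare this expression to the analogous object in which $X_s$ is replaced by a Gaussian process $Y_s$ with the same covariance structure. On the Gaussian side, a standard rescaling together with Borell's Gaussian noise stability inequality should yield the main term $\Lambda_\rho(\EE[g])$, and the rest of the work reduces to estimating the pathwise discrepancy $|\EE[f(X_t)^2] - \EE[f(Y_t)^2]|$.

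The heart of the argument, and the principal obstacle, is to control this discrepancy by a polynomial function of $\kappa = \MAXINF(g)$. The classical Lindeberg-replacement strategy of \cite{MOO} invokes hypercontractivity at a single time and pays a factor logarithmic in $\kappa$. The advantage of the SDE framework is that the replacement may be carried out infinitesimally along the trajectory, so that the per-coordinate cost can be made to scale as $\INF_i(f)^{1+\eta}$ for some $\eta = \eta(\rho) > 0$ instead of $\INF_i(f)$ with a logarithmic correction. Summing via $\sum_i \INF_i(f)^{1+\eta} \le \kappa^\eta \sum_i \INF_i(f)$ then converts the low-influence hypothesis into a polynomial bound in $\kappa$. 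The explicit exponent $\eta = (1-\rho)/27$ should arise from tracking the spectral contraction rate of $X_t$ at the relevant time, while the prefactor $(1-\sqrt{\rho})^{-1}$ comes from the change of variables between $t$ and $\rho$, which degenerates as $\rho \to 1$.

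A secondary technical difficulty is that the diffusion coefficients of $X_t$ degenerate as the process approaches the boundary of $\CC$ and lands on the discrete cube; the max-influence assumption on $g$ must be used to ensure that $f(X_t)$ remains sufficiently regular along all coordinate directions for the infinitesimal replacement scheme to close, uniformly up to the terminal time.
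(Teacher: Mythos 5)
Your setup is right and matches the paper's: reduce $\Stab_\rho(g)$ to $\EE[N_t^2]$ for $N_t = f(X(t))$ via \eqref{stabNt}, and then compare the accumulated quadratic variation of $N_t$ to that of a Gaussian model. But the heart of your argument --- the claim that an ``infinitesimal Lindeberg replacement'' along the trajectory makes the per-coordinate cost scale as $\INF_i(f)^{1+\eta}$, yielding a polynomial bound in $\kappa$ --- is asserted rather than proved, and this is exactly the step where the whole difficulty of the theorem lives. The logarithmic loss in \cite{MOO} does not come from applying hypercontractivity ``at a single time''; it comes from the moment structure of the error terms in the replacement, and merely spreading the replacement continuously over time does not by itself change that structure. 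As written, your proposal does not contain a mechanism that produces the polynomial exponent, so there is a genuine gap.

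The paper's route is different and worth contrasting. It never replaces $X_s$ by a Gaussian process and never invokes Borell's inequality: the main term $\Lambda_\rho(\EE[g])$ comes from a one-dimensional model martingale $M_t$ defined by $\frac{d}{dt}[M]_t = I(M_t)^2$ (with $I$ the Gaussian isoperimetric profile), whose second moment equals $\Lambda_{1-e^{-t}}(M_0)$ exactly (Lemma \ref{lem:isopermitrytonoise}). The comparison is carried out by coupling $M_t$ and $N_t$ as time changes of a single Brownian motion (Proposition \ref{prop:coupling}), and the polynomial dependence on $\kappa$ enters through two separate ingredients: (i) a pathwise level-$1$ inequality, $|\sigma_t \nabla g(X(t))| \leq I(g(X(t))) + C_1\sqrt{\kappa_t}/I(g(X(t)))$ with $\kappa_t = \max_i(\partial_i g(X(t)))^2$, proved by a quantitative CLT in Wasserstein distance (Rio's bound, whose error is controlled by $\max_i \theta_i^2\,\EE[Y_i^4]$) applied to the isotropic conditional law of $X(\infty)\mid X(t)$; and (ii) a maximal-inequality argument (hypercontractivity of the process plus Doob) showing that $\kappa_t$ stays of order $\kappa^{c}$ along the path with high probability, after mollifying $g$ to $g_\eps = T_{e^{-\eps}}g$. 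If you want to salvage your Lindeberg-style plan, you would need to supply an actual estimate playing the role of (i) --- some inequality whose error term is polynomial in the maximal influence --- and you would also need to address that the pointwise derivatives $\partial_i f(X(s))$, not just the static influences $\INF_i(f)$, must be controlled uniformly along the trajectory, which is a separate argument in the paper.
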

    The original proof of Theorem \ref{thm:moo}, in \cite{MOO}, gave the quantitative bound $$\delta = O\left(\frac{\log\log\frac{1}{\kappa}}{\log \frac{1}{\kappa}}\right).$$ Another proof, based on induction over the dimension, appears in \cite{de2016majority}, with a similar logarithmic dependency. Thus Theorem \ref{thm:majoritymain} should be seen as a quantitative improvement of Theorem \ref{thm:moo}, which affords polynomial bounds.
    There are also several generalizations of the majority is stablest theorem which appear in the literature, such as \cite{dinur2009conditional}, \cite{filmus2018invariance} and \cite{mossel2010gaussian}. It would be interesting to see if our technique is also applicable in those settings.
	\paragraph{Acknowledgements:} Work on this paper was initiated at the Simons institute's "Probability, Geometry, and Computation in High Dimensions" program. We thank the institute for its hospitality and the program's participants for contributing ideas to the polymath project from which this work eventually developed. We owe a special debt of gratitude to Joe Neeman, for many discussions and for taking an active part in the early stages of this work.
	\section{Stochastic constructions} \label{sec:constructions}
    
	At the center of this work is a stochastic process to sample bits.  
	To motivate the construction, let us begin with the standard Brownian motion.  Suppose we would like to sample a uniformly random bit $X \in \{-1,+1\}$ in a continuous incremental fashion.  One approach would be to run a Brownian motion starting at $0$ and output whichever of $\{-1,+1\}$ the Brownian motion reaches first.  Specifically, if we define a stochastic process,
	\[X(0) = 0, ~~~~  dX(t) = \mathbf{1}_{|X(t)| \leq 1} \cdot dB(t)\]
	for standard Brownian motion $B(t)$ on $\mathbb{R}$, stopping the walk when it reaches $\{-1,+1\}$.
	This stochastic construction has the property that the  expected variance of the output bit drops linearly with time.  Specifically, the variance of the output bit, namely $X(\infty)$, satisfies the following stochastic differential equation,
	\[ d \mathrm{Var}[X(\infty)|X(t)] = d\left[1-X(t)^2\right] = -2X(t) \cdot dB(t) - dt. \]
	In other words, $\mathrm{Var}[X(\infty)|X(t)]$ has a constant drift $dt$ along with a martingale term.  Intuitively, the process leaks variance at a constant rate.

    The stochastic process we consider is obtained by requiring that it leaks information (as measured by Shannon entropy) at a constant rate.  Formally, consider the following stochastic differential equation:
    \[X(0) = 0, ~~~~  dX(t) = \sqrt{(1+X(t))(1-X(t))} \cdot \mathbf{1}_{|X(t)| \leq 1} \cdot dB(t)\]
    If $\mathrm{H}(X(\infty)|X(t))$ denotes entropy, then it satisfies the stochastic differential equation with a constant drift, namely,
    	\[ d \mathrm{H}[X(\infty)|X(t)]] =  -\frac{1}{2}\log \left(\frac{(1+X(t))}{(1-X(t))}\right)  dB(t) - dt, \]
   and the information leaks at a constant rate.  It is not difficult to see that the above stochastic process is the unique martingale for which the entropy leaks at a constant rate.
    
    In higher dimensions, the natural generalization is to use an independent copy of the process for each coordinate.
    Formally, let $B(t)$ be a standard Brownian motion in $\RR^n$, and consider the martingale defined by the following stochastic differential equation:
	$$
	X(0) = 0, ~~~~ d X(t) = \sigma_t d B(t),
	$$
	where $\sigma_t$ is the diagonal matrix with 
	$$
	(\sigma_t)_{i,i} := \sqrt{(1+X_i(t)) (1-X_i(t))} \mathbf{1}_{|X_i(t)| \leq 1}.
	$$ 
        We call the process $X_t$ a \emph{renormalized Brownian motion} for reasons that will become more apparent later on. We note at this point that $(\sigma_t)_{i,i}^2 = \mathrm{Var}(X_i(\infty) | X_i(t))$ (see lemma \ref{lem:momentsprocess} below) which means that each coordinate is moving at a speed proportional to its remaining variance. As we will see later on, the behavior of $f(X(t))$ will be related to moments of the law of $X_i(\infty) | X_i(t)$, pushed forward by the map $\sigma_t^{-1}$. The choice of $\sigma_t$ ensures that this measure is \emph{isotropic}, which will play a key role in our proof.
        
        We define 
	$$
	\nu_t := \mathrm{law}(X_1(t)).
	$$
	This measure will correspond to our main notion of noise in view of equations \eqref{eq:altnoise1} and \eqref{eq:altnoise2}. A calculation gives the following,
	\begin{lemma} \label{lem:noisetonoise}
		One has, for all $t \geq 0$,
		\begin{equation} \label{eq:varnu}
			\mathrm{Var}[\nu_t] = 1-e^{-t}.
		\end{equation}
	\end{lemma}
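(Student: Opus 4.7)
The plan is a direct Itô calculation on the squared coordinate process. Since the diffusion matrix $\sigma_t$ is diagonal and its $i$-th entry depends only on $X_i(t)$, the coordinates of $X(t)$ are independent one-dimensional SDEs, so it suffices to analyze $Y(t) := X_1(t)$ solving
$$Y(0) = 0, \qquad dY(t) = \sqrt{(1+Y(t))(1-Y(t))}\,\mathbf{1}_{|Y(t)|\leq 1}\, dB_1(t).$$
Because the diffusion coefficient vanishes at $\pm 1$ and paths are continuous, $Y$ is absorbed as soon as it hits the boundary, so a.s.\ $|Y(t)| \leq 1$ for all $t \geq 0$ and the indicator can be dropped inside the expectations.

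Next I would note that $Y(t)$ is a (bounded) martingale with $Y(0)=0$, hence $\EE[Y(t)] = 0$ and $\mathrm{Var}[\nu_t] = \EE[Y(t)^2]$. Applying Itô's formula to $\varphi(y) = y^2$ gives
$$d(Y(t)^2) = 2Y(t)\,dY(t) + d\langle Y\rangle_t = 2Y(t)\sqrt{1-Y(t)^2}\, dB_1(t) + \bigl(1-Y(t)^2\bigr) dt,$$
where I have used that $\sigma_t^2 = 1 - Y(t)^2$ almost surely. The stochastic integral is a true martingale (the integrand is bounded), so taking expectations and setting $v(t) := \EE[Y(t)^2]$ yields the ODE
$$v'(t) = 1 - v(t), \qquad v(0) = 0,$$
whose unique solution is $v(t) = 1 - e^{-t}$, giving the claimed identity.

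There is no real obstacle here; the only point that needs minor care is justifying that the local-martingale part of the Itô expansion has zero expectation, which follows from boundedness of $Y(t)\sqrt{1-Y(t)^2}$ on $[-1,1]$, and that the indicator $\mathbf{1}_{|Y(t)|\leq 1}$ can be discarded, which follows from the absorption argument above.
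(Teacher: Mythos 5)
Your proof is correct and follows essentially the same route as the paper: apply It\^o's formula to $X_1(t)^2$, take expectations to obtain the ODE $v'(t)=1-v(t)$ with $v(0)=0$, and solve. The extra care you take in justifying the absorption at $\pm 1$ and the vanishing expectation of the stochastic integral is fine but not a substantive difference.
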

 \begin{proof}
    By It\^o's formula, we have $d X_1(t)^2 = (\sigma_t)_{1,1} d B_1(t) + (1-X_1(t)^2) dt$, and therefore, $\frac{d}{dt} \EE X_1(t)^2 = \EE[1-X_1(t)^2]$, which implies that $\mathrm{Var}[X_t(t)] = 1-e^{-t}$.
 \end{proof}

	Fix a function $f:\DC \to [0,1]$ (which later on will be a Boolean function). At the center of our analysis is the process
	$$
	N_t := f(X(t)).
	$$
	This process is associated with noise stability via the relation
	$$
	\Stab_{\mathrm{Var}[\nu_t]}(f) \stackrel{\eqref{eq:stabnu}}{=} \EE[N_t^2],
	$$
	which, using \eqref{eq:varnu}, gives
	\begin{equation} \label{stabNt}
		\Stab_{\rho}(f) = \EE \left [N_{\log\frac{1}{1-\rho}}^2 \right].
	\end{equation}
	Moreover, we have, by definition,
	\begin{equation} \label{eq:CKNt}
		\CK_{\varphi,\nu_t} (f) = \EE[\varphi(N_t)] - \varphi \left (\int f d \mu^n \right ).
	\end{equation}
	These two identities will allow us to analyze noise stability and its generalization through the analysis of this process. An important property of the process is the following:
	\begin{fact}
		The process $N_t$ is a martingale.
	\end{fact}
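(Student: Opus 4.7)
The key observation is that $f$, when extended to the continuous cube $\CC$, is a \emph{multilinear} polynomial, so it is affine in each coordinate separately. In particular, its pure second partials all vanish:
$$\partial_{ii} f \equiv 0 \quad \text{for every } i \in [n].$$
I will exploit this together with It\^o's formula.

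First I would verify that the process $X(t)$ is well-defined and remains in $\CC = [-1,1]^n$ almost surely. Since the diffusion coefficient $(\sigma_t)_{i,i} = \sqrt{(1+X_i(t))(1-X_i(t))}\,\mathbf{1}_{|X_i(t)|\leq 1}$ vanishes at the boundary and outside $[-1,1]$, coordinate $i$ is frozen whenever $|X_i(t)|\geq 1$; combined with $X(0)=0$ and continuity of sample paths, this shows $X(t)\in \CC$ for all $t\geq 0$. Consequently $N_t = f(X(t))$ is bounded (by $\sup_{\CC}|f|<\infty$, which is finite since $\CC$ is compact and $f$ is a polynomial).

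Next I would apply It\^o's formula to $f(X(t))$. Because the driving coordinates $B_1,\dots,B_n$ are independent standard Brownian motions and $\sigma_t$ is diagonal, the quadratic covariations satisfy $d\langle X_i,X_j\rangle_t = \delta_{ij}(\sigma_t)_{i,i}^2\,dt$. Therefore
$$
d N_t = \sum_{i=1}^n \partial_i f(X(t))\,(\sigma_t)_{i,i}\,dB_i(t) + \tfrac{1}{2}\sum_{i=1}^n \partial_{ii} f(X(t))\,(\sigma_t)_{i,i}^2\,dt.
$$
By multilinearity of $f$, the drift term is identically zero, leaving
$$
d N_t = \sum_{i=1}^n \partial_i f(X(t))\,(\sigma_t)_{i,i}\,dB_i(t).
$$

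This representation shows $N_t$ is a local martingale. To upgrade to a true martingale, I would use that each integrand $\partial_i f(X(t))\,(\sigma_t)_{i,i}$ is uniformly bounded (again because $X(t)\in \CC$, $\partial_i f$ is a polynomial on a compact set, and $(\sigma_t)_{i,i}\leq 1$), so the It\^o integrals on the right are genuine $L^2$ martingales. There is no real obstacle here beyond being careful about the boundary behavior of $\sigma_t$; once one notes the vanishing of $\partial_{ii}f$ for multilinear $f$, the martingale property is an immediate consequence of It\^o's formula.
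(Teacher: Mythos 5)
Your proof is correct, but it takes a different route from the paper's. The paper argues purely algebraically: it expands $N_t$ as a multilinear combination of the coordinate processes and uses the elementary fact that a product of \emph{independent} bounded martingales is a martingale (so each monomial $\prod_{i\in A}X_i(t)$ is a martingale, and hence so is their linear combination). You instead run It\^o's formula and observe that multilinearity kills the pure second partials $\partial_{ii}f$, while the diagonal structure of $\sigma_t$ together with independence of the driving Brownian coordinates kills the cross-variation terms, leaving a driftless (local) martingale that is genuine by boundedness. Both arguments lean on the same two ingredients --- multilinearity of the extension and independence of the coordinates --- but deploy them differently: the paper's version needs no stochastic calculus beyond the martingale property of each $X_i$, whereas yours requires It\^o's formula and the local-to-true upgrade, in exchange for producing the explicit representation $dN_t=\sum_i \partial_i f(X(t))(\sigma_t)_{i,i}\,dB_i(t)$, which the paper in any case derives later (in Claim \ref{claim:1} and in the derivation of $d[N]_t$). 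Your care about the process staying in $[-1,1]^n$ and about the boundedness of the integrands is appropriate and not a gap.
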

 \begin{proof}
 Note that $N_t$ is a multilinear function of the martingale $X_1(t),...,X_n(t)$. Since those are independent martingales, every product of a subset of these processes is a martingale.
 \end{proof}
	In section \ref{sec:properties} we detail some further properties of the process $X(t)$, which shall prove useful in the proofs to come.\\
	
	Let us now introduce the main idea of our approach. In brief, our technique relies on comparing the evolution of the martingale $N_t$ to another martingale $M_t$, a so-called model process. The process $M_t$ will be constructed in a problem-dependent way, a process corresponding to the dictator function in the proof of Theorem \ref{thm:CKvariant} and a process in Gaussian space for Theorem \ref{thm:majoritymain}. At the heart of our proof is a coupling between the processes, used in \cite{eldan2015two}, whose existence is ensured by the following proposition.
	\begin{proposition} \label{prop:coupling}
		Let $(M_t)_{t \geq 0}$ and $(N_t)_{t \geq 0}$ be continuous martingales (defined on different probability spaces) such that $M_0 = N_0$. Then, these two processes can be defined over the same probability space, along with a process $(W_\tau)_{\tau \geq 0}$ such that $W_\tau$ is a standard Brownian motion with starting condition $W_0 = M_0$ and such that, almost surely,
		\begin{equation}
			M_t = W_{[M]_t}~~~ \mbox{and}~~~ N_t = W_{[N]_t}, ~~~ \forall t \geq 0.
		\end{equation}
	\end{proposition}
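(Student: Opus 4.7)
The plan is to derive the proposition from the Dambis-Dubins-Schwarz (DDS) time-change representation of continuous martingales, combined with a gluing step using regular conditional distributions.

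First, I would apply DDS separately to each martingale on its own probability space. Since $M$ is continuous, there exists (possibly on an enlargement of its original space) a standard Brownian motion $B^M$ with $B^M_0 = M_0$ such that $M_t = B^M_{[M]_t}$ almost surely; the analogous statement gives $B^N$ with $N_t = B^N_{[N]_t}$. Because $M_0 = N_0$ is deterministic, both $B^M$ and $B^N$ have the same marginal law, namely Wiener measure $\mathcal{W}$ started at $M_0$.

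Next, let $\mu^M$ and $\mu^N$ be the joint laws of the pairs $(B^M, [M]_\bullet)$ and $(B^N, [N]_\bullet)$ respectively, viewed as probability measures on the Polish space $C(\RR_+,\RR)\times C(\RR_+,\RR_+)$. Both have first marginal $\mathcal{W}$. Using the disintegration theorem, write $\mu^M(dw,d\tau)=\mathcal{W}(dw)\kappa^M(w,d\tau)$ and similarly for $\mu^N$, with $\kappa^M,\kappa^N$ regular conditional kernels. On the space $C(\RR_+,\RR)\times C(\RR_+,\RR_+)^2$ I then define the probability measure $\mathcal{W}(dw)\kappa^M(w,d\tau_M)\kappa^N(w,d\tau_N)$, with coordinate projections $W$, $T_M$, $T_N$. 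By construction $(W,T_M)\stackrel{d}{=}(B^M,[M])$ and $(W,T_N)\stackrel{d}{=}(B^N,[N])$, but the Brownian motion $W$ is now common to both.

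Finally, set $\tilde{M}_t := W_{T_M(t)}$ and $\tilde{N}_t := W_{T_N(t)}$. Since the map $(b,\tau)\mapsto(b\circ\tau,\tau)$ is measurable, the equality in law of $(W,T_M)$ and $(B^M,[M])$ promotes to $(\tilde{M},T_M,W)\stackrel{d}{=}(M,[M],B^M)$, giving $\tilde{M}\stackrel{d}{=} M$ and $T_M=[\tilde{M}]$ almost surely, and analogously for $\tilde{N}$ and $N$. This delivers the claimed coupling. I expect no substantial obstacle: DDS and disintegration on Polish spaces are standard. The only detail worth a remark is that if $[M]_\infty$ or $[N]_\infty$ is finite, the DDS Brownian motion is initially defined only up to that random horizon and must be extended to all of $\RR_+$ by appending an independent Brownian motion, which does not affect any of the identities in the conclusion.
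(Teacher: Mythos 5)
Your argument is correct and follows essentially the same route as the paper: apply Dambis/Dubins--Schwartz to each martingale separately and then glue the two resulting Brownian motions over a common copy. The only difference is that where the paper cites \cite[Theorem 10]{eldan2015two} for the gluing step, you carry it out explicitly via disintegration of the joint laws over the common Wiener marginal, which is precisely the content of that lemma; your closing remark about extending the DDS Brownian motion past a finite $[M]_\infty$ is also the right caveat.
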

	\begin{proof}
		By invoking the Dambis / Dubins-Schwartz theorem, one may define a Brownian motion $(W_\tau)_{\tau \geq 0}$ over the same probability space as $M_t$, which satisfies $M_t = W_{[M]_t}$ for all $t \geq 0$. The same can be done for the process $N_t$ yielding a Brownian motion $\tilde W_t$. Finally, the four processes can be defined over one probability space in a way that $W_\tau = \tilde W_\tau$ almost surely by invoking \cite[Theorem 10]{eldan2015two}.
	\end{proof}
	\section{A variant of the Courtade-Kumar conjecture}
	In this section we prove Theorem \ref{thm:CKvariant}. We begin by defining a model process $M_t$ and establishing a comparison principle. 
	
	\paragraph{Comparison with dictators:}
	Let $f(x) = \mathbf{1}\{x_i > 0 \}$ be the dictator function and let $g:\DC \to \{0,1\}$ be a balanced Boolean function. That is, $\int g d\mu^n = \frac{1}{2}$. Consider the martingales $N_t = f(X(t)), M_t = g(\tilde X(t))$, where $\tilde X(t)$ denotes a process having the same distribution as that of $X(t)$ but which lives on a different probability space.
	
	The proof of Theorem \ref{thm:CKvariant} will proceed by comparing the evolution of the processes $N_t$ and $M_t$. The next claim will be our main vehicle for the comparison. 
	\begin{claim} \label{claim:1}
		We have,
		$$
		\frac{d}{dt} [M]_t = \|\sigma_t \nabla f(X(t))\|_2^2 = f(X(t))(1-f(X(t))) = M_t(1-M_t)
		$$
		and
		$$
		\frac{d}{dt} [N]_t = \|\sigma_t \nabla g(\tilde X(t))\|_2^2 \leq g(\tilde X(t)) (1-g( \tilde X(t))) = N_t(1-N_t).
		$$
	\end{claim}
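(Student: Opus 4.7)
The plan is to decompose the claim into two independent ingredients: an It\^o computation of the quadratic variation, and a pointwise inequality on the multilinear extension of a Boolean function.

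First I would establish the equality $\frac{d}{dt}[\cdot]_t = \|\sigma_t \nabla(\cdot)\|_2^2$ in each line. The multilinear extension of any function on $\DC$ is of degree one in each coordinate, so all pure second partials $\partial_{ii}^2$ vanish. Since $\sigma_t$ is diagonal, the It\^o correction in expanding $f(X(t))$ or $g(\tilde X(t))$ is zero, and both $N_t$ and $M_t$ evolve as pure stochastic integrals $\nabla(\cdot)^T \sigma_t\, dB(t)$, from which the quadratic-variation identities follow immediately.

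For the dictator $f(x) = \mathbf{1}\{x_i > 0\}$, the multilinear extension is literally $f(x) = (1+x_i)/2$, so $\nabla f = e_i/2$ and a one-line substitution gives $\|\sigma_t \nabla f(X(t))\|_2^2 = \tfrac{1}{4}(1-X_i(t)^2) = f(X(t))(1-f(X(t)))$. This settles the equality part of the claim.

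The inequality part reduces to the pointwise estimate
$$\sum_{i=1}^n (1-x_i^2)(\partial_i g(x))^2 \leq g(x)(1-g(x)), \qquad x \in [-1,1]^n,$$
on the multilinear extension of any Boolean $g$. My plan is induction on $n$. The base case $n=1$ is a direct check on the four Boolean functions (in fact equality holds). For the inductive step, decompose $g(x) = \tfrac{1+x_n}{2} g_+(x_{<n}) + \tfrac{1-x_n}{2} g_-(x_{<n})$ with $g_\pm$ Boolean in $n-1$ variables. A short expansion (the law of total variance applied to the two-point distribution that takes values $g_\pm$ with probabilities $\tfrac{1 \pm x_n}{2}$) yields
$$g(x)(1-g(x)) = \tfrac{1+x_n}{2} g_+(1-g_+) + \tfrac{1-x_n}{2} g_-(1-g_-) + (1-x_n^2)(\partial_n g(x))^2,$$
which accounts for the $i = n$ contribution. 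For $i < n$, the linearity $\partial_i g = \tfrac{1+x_n}{2}\partial_i g_+ + \tfrac{1-x_n}{2}\partial_i g_-$ combined with convexity of $t \mapsto t^2$ gives $(\partial_i g(x))^2 \leq \tfrac{1+x_n}{2}(\partial_i g_+)^2 + \tfrac{1-x_n}{2}(\partial_i g_-)^2$. Multiplying by $(1-x_i^2)$, summing over $i < n$, and applying the inductive hypothesis to $g_\pm$ at $x_{<n}$ matches the first two terms on the right-hand side above. The only non-routine step, and what I expect to be the main conceptual obstacle, is spotting that the correct inductive pairing combines Jensen's inequality (for $i < n$) with the total-variance identity (for $i = n$); once this pairing is in hand, the rest is bookkeeping.
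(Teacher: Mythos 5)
Your proposal is correct, and the two equality parts (the It\^o computation killing the second-order term because the multilinear extension has no pure second partials and $\sigma_t$ is diagonal, and the explicit dictator calculation) coincide with the paper's argument. Where you genuinely diverge is in the proof of the key pointwise inequality $\sum_i (1-x_i^2)(\partial_i g(x))^2 \leq g(x)(1-g(x))$. The paper proves this as a ``level-1'' Parseval/Bessel inequality in a biased Fourier basis: it identifies $x$ with the mean of the conditional law $Y = X(\infty)\,|\,X(0)=x$, introduces the characters $\tilde\chi_A$ orthonormal for that product of biased Bernoullis, observes that $g(x)^2$ and $(1-x_i^2)(\partial_i g(x))^2$ are exactly the squared Fourier weights at levels $0$ and $1$, and uses $\EE[g(Y)^2]=\EE[g(Y)]=g(x)$ for Boolean $g$ to conclude. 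Your route is induction on $n$, splitting $g = \frac{1+x_n}{2}g_+ + \frac{1-x_n}{2}g_-$, handling the $i=n$ term by the total-variance identity for the two-point mixture (which I verified is an exact algebraic identity) and the $i<n$ terms by Jensen's inequality plus the inductive hypothesis on the Boolean restrictions $g_\pm$. Both are complete; yours is more elementary and self-contained, while the paper's phrasing makes the statement visibly a truncation of a full Parseval identity in the biased basis, which foreshadows the more delicate level-1 inequality (Theorem \ref{thm:level1}) used later for the majority-is-stablest argument. One small point worth making explicit if you write this up: the inductive hypothesis must be stated for the multilinear extension at arbitrary $x_{<n}\in[-1,1]^{n-1}$ (not just at cube vertices), which is fine since $g_\pm$ are themselves multilinear extensions of Boolean functions on $\{-1,1\}^{n-1}$, and the factor $1-x_i^2\ge 0$ is needed to preserve the direction of Jensen's inequality after multiplication.
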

	\begin{proof}
		Ito's formula gives,
		$$
		d M(t) = \nabla f(X(t)) d X(t) = \nabla f(X(t)) \sigma_t d B_t,
		$$
		which implies that
		$$
		d[M]_t = \|\sigma_t \nabla f(X(t))\|_2^2 dt.
		$$
		Note that we have the following multilinear representation for the dictator, $f(x) = \frac{x_1 + 1}{2}$. In particular, $\nabla f = (\frac{1}{2},0,\dots,)$, and
		$$\frac{d}{dt}[M]_t = \|\sigma_t \nabla f(X(t))\|_2^2 = \frac{1}{4} |(\sigma_t)_{1,1}|^2 = \frac{1}{4}(1-X_1(t)(1+X_1(t)) =  f(X(t))(1-f(X(t))).$$
		Similarly, for $N_t$, 
		$$
		\frac{d}{dt}[N]_t = \|\sigma_t \nabla g(\tilde{X}(t))\|_2^2 = \sum\limits_{i \in [n]} (1- \tilde{X}(t)^2)\partial_ig(\tilde{X}(t))^2.
		$$
		The inequality is a consequence of the following version of Parseval's inequality, which we shall prove below:
     \begin{equation} \label{eq:biasedpars}
     g(x) \geq g(x)^2 + \sum\limits_{i=1}^n(1-x_i^2)(\partial_ig(x))^2.
    \end{equation}
        Rearranging the terms in \eqref{eq:biasedpars} gives:
		$$
		g(x)(1-g(x)) \geq \sum_{i \in [n]} (1-x_i^2)  (\partial_i g(x))^2, 
		$$
    which is the desired claim when $x = \tilde{X}(t)$.
    To prove \eqref{eq:biasedpars} note that, as $g$ is multilinear extension and since $X_t$ is a martingale, 
    $$g(x) = \EE\left[g(X(\infty))|X(0) = x\right],$$
    and $Y:=X(\infty)|X(0) = x$ is supported on $\{-1,1\}$ with $\PP\left(Y = 1\right) = \frac{1 + x}{2}$, as in Lemma \ref{lem:momentsprocess}. 
    For $A \subset [n]$, define the biased characters,
    $$\tilde{\chi}_A(y) = \prod_{i \in A}\frac{y - x}{1-x^2}.$$
    The set $\{\tilde{\chi}_A\}_{A \subset [n]}$ is orthonormal with respect to the law of $Y$ and induces the Fourier decomposition,
    $$g = \sum\limits_{A \subset [n]}  \hat{\tilde{g}}(A)\tilde{\chi}_A,$$
    with $\hat{\tilde{g}}(A) := \EE\left[g(Y)\tilde{\chi}_A(Y) \right]$. Thus, from orthonormality we deduce the following identity,
    $$g(x)^2 = \EE\left[g(Y)\right]^2 = \EE\left[g(Y)\tilde{\chi}_{\emptyset} (Y)|\right]^2=\hat{\tilde{g}}(\emptyset)^2.$$
    Similarly, by rewriting $\partial_ig$ in terms of the Fourier coefficients, e.g. as in \cite[Equation (8.7)]{odonnell2014analysis}, 
    $$(1-x_i^2)(\partial_ig(x))^2  = (1-x_i)^2\EE\left[\partial_ig(Y)\right]^2 = \EE\left[g(Y)\tilde{\chi}_{\{i\}} (Y)\right]=\hat{{\tilde{g}}}(\{i\})^2.$$
    Combining the last two identities, noting that $\EE\left[g(Y)^2\right] = \EE\left[g(Y)\right]=g(x)$, and applying Parsevel's inequality, we obtain \eqref{eq:biasedpars}.
	\end{proof}
	We are finally ready to prove the variant of the Courtade-Kumar conjecture.
	\begin{proof} [Proof of Theorem \ref{thm:CKvariant}]
		Apply Proposition \ref{prop:coupling} to the processes $M_t$ and $N_t$ defined in the previous section, so that
		\begin{equation}\label{eq:dds}
			M_t = W_{[M]_t}, ~~ N_t = W_{[N]_t}, ~~ \forall t \geq 0.
		\end{equation}

		Let $\mathcal{T}_1(\cdot), \mathcal{T}_2(\cdot)$ be the inverse of the non-decreasing functions $t \to [M]_t$ and $t \to [N]_t$ respectively. Consider the stopping time,
		$$
		\tau_{\max} := \min \{t \geq 0; |W_t| = 1 \}. 
		$$ 
		Then for all $0 \leq \tau \leq \tau_{\max}$, we have by \eqref{eq:dds} that
		$$
		M_{\mathcal{T}_1(\tau)} = N_{\mathcal{T}_2(\tau)} = W_\tau.
		$$
		An application of Claim \ref{claim:1} gives
		$$
		\mathcal{T}_1'(\tau) = \frac{1}{ M_{\mathcal{T}_1(\tau)}(1-M_{\mathcal{T}_1(\tau)}) } = \frac{1}{W_\tau(1-W_\tau)} \leq \mathcal{T}_2'(\tau), ~~ \forall 0 \leq \tau \leq \tau_{\max}.
		$$
		By integrating this inequality, we have $\mathcal{T}_1(\tau) \leq \mathcal{T}_2(\tau)$, almost surely, for all $0 \leq \tau \leq \tau_{\max}$. \\
		
		Now for a fixed $t_0 \geq 0$ set $T_1 = [M]_{t_0}, T_2 = [N]_{t_0}$. By the above, we have almost surely that $T_1 \geq T_2$. If $\varphi$ is convex, by Jensen's inequality we have,
		\begin{align*}
			\EE [\varphi(f(X_{t_0}))] &= \EE[\varphi(W_{T_1})] = \EE [\varphi(W_{T_1}) | W_{T_2}] \\
			&\geq \EE [\varphi(W_{T_2})] = \EE [ \varphi(g(X_{t_0}) ].
		\end{align*}
		This completes the proof.
	\end{proof}
	\section{Majority is stablest}
	In this section we prove Theorem \ref{thm:majoritymain}. The proof relies on the same kind of coupling as the one used in the previous section, but the process $M_t$ will be different - it will correspond to the majority function rather than to dictator. In fact, in light of the central limit theorem, we may define $M_t$ in a way that mimics a corresponding one-dimensional version of the process on Gaussian space. 
    In what comes next, we work with the processes $X(t)$ and $N_t$, defined as in section \ref{sec:constructions}.
	
	\subsection*{The model process}
	Denote by $\Phi:\RR\to \RR$  the standard Gaussian cumulative distribution function (CDF),
	$$\Phi(x) = \frac{1}{\sqrt{2\pi}}\int\limits_{x}^\infty e^{-t^2/2}dt,$$
	with $\Phi^{-1}$ its inverse and $\Phi'$ the standard Gaussian density.
	Define
	$$
	I(s) = \Phi'(\Phi^{-1}(s)),
	$$
	the Gaussian isoperimetric profile. Define the process $N_t$ to be the unique martingale satisfying,
	\begin{equation} \label{eq:QVM}
		M_0 = N_0, ~~ \frac{d}{dt} [M]_t = I(M_t)^2.
	\end{equation}
	This process was introduced in \cite{eldan2015two}, to study Gaussian noise stability, as defined in \eqref{eq:gaussnoise}. We use the following identity, which connects between the quadratic variation of $N_t$ and Gaussian noise stability (see \cite[Section 2.3]{eldan2015two} for the derivation).
	\begin{lemma} \label{lem:isopermitrytonoise}
		Let $t \geq 0$, then
		$$\EE\left[M_t^2\right] = \Lambda_{1-e^{-t}}(M_0),$$
		where $\Lambda$ stands for Gaussian noise stability, defined in \eqref{eq:gaussnoise}.
	\end{lemma}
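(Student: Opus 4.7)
The plan is to realize $M_t$ explicitly as $M_t = \Phi(Y_t)$ for an explicit one-dimensional Gaussian process $Y_t$, and then translate the second moment $\EE[M_t^2]$ into a two-dimensional Gaussian orthant probability matching the definition of $\Lambda_\rho$.

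First I would make the ansatz $M_t = \Phi(Y_t)$ with $Y_0 = \Phi^{-1}(M_0)$ and derive the SDE that $Y_t$ must satisfy. It\^o's formula, combined with the identity $\Phi''(y) = -y\,\Phi'(y)$, shows that requiring $M_t$ to be a martingale with $\tfrac{d}{dt}[M]_t = I(M_t)^2 = \Phi'(Y_t)^2$ forces
\[
dY_t = dB_t + \tfrac{1}{2}\,Y_t\,dt, \qquad [Y]_t = t.
\]
This linear SDE has the explicit solution $Y_t = e^{t/2}Y_0 + e^{t/2}\int_0^t e^{-s/2}\,dB_s$, so $Y_t$ is Gaussian with mean $e^{t/2}\Phi^{-1}(M_0)$ and variance $e^t - 1$. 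By uniqueness of the martingale characterised by \eqref{eq:QVM}, the constructed $\Phi(Y_t)$ coincides with $M_t$.

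Next, I would compute the second moment by means of the elementary identity $\Phi(y)^2 = \PP(G_1 \leq y,\,G_2 \leq y)$ for independent standard Gaussians $G_1, G_2$, taken independent of $Y_t$. Taking expectations and using Fubini gives
\[
\EE[M_t^2] \;=\; \PP(Y_t - G_1 \geq 0,\ Y_t - G_2 \geq 0).
\]
The pair $(Y_t - G_1,\,Y_t - G_2)$ is jointly Gaussian with common mean $e^{t/2}\Phi^{-1}(M_0)$, each marginal of variance $e^t$, and covariance $\mathrm{Var}(Y_t) = e^t - 1$. Rescaling both coordinates by $\sqrt{e^t}$ and using the sign-flip symmetry of zero-mean Gaussians rewrites this probability as that of two standard Gaussians of correlation $1 - e^{-t}$ both lying below $\Phi^{-1}(M_0)$, which is precisely $\Lambda_{1-e^{-t}}(M_0)$.

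The only conceptual step is the first one: guessing the change of variables $M_t = \Phi(Y_t)$ and verifying that the drift and diffusion terms produced by It\^o's formula match the defining SDE \eqref{eq:QVM}. Once that identification is made, everything that follows is routine — solving a linear Gaussian SDE explicitly and performing a short covariance bookkeeping to match the definition of $\Lambda_\rho$.
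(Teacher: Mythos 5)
Your proof is correct. Note that the paper does not actually prove this lemma; it defers to \cite[Section 2.3]{eldan2015two}, and your argument --- realizing $M_t = \Phi(Y_t)$ for the explicit Gaussian process $dY_t = dB_t + \tfrac12 Y_t\,dt$ and then converting $\EE[\Phi(Y_t)^2]$ into a bivariate Gaussian orthant probability with correlation $(e^t-1)/e^t = 1-e^{-t}$ --- is essentially the derivation given there. The only caveats are cosmetic: the paper's displayed formula for $\Phi$ is actually the survival function and its description of $\Phi^{-1}$ is garbled, but with the standard CDF convention (which you use, and which is clearly intended) every step checks out, including the identification of $\Phi(Y_t)$ with $M_t$ via uniqueness in law for the SDE $dM_t = I(M_t)\,dB_t$.
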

	The representation in Lemma \ref{lem:isopermitrytonoise} allows to deduce Lipschitz bounds on $\Lambda$.
	\begin{lemma} \label{lem:lipschitzlambda}
		Let $0\leq \rho <\rho' \leq 1$. Then, for every $a \in (0,1)$,
		$$|\Lambda_{\rho'}(a) - \Lambda_{\rho}(a)| \leq \frac{\rho' - \rho}{1-\rho'}.$$
	\end{lemma}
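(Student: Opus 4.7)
The plan is to express $\Lambda_\rho(a)$ as the second moment of the model martingale $M_t$ from \eqref{eq:QVM} started at $M_0 = a$, and then control the growth of $\EE[M_t^2]$ as $t$ increases by exploiting the quadratic variation identity $d[M]_t = I(M_t)^2 \, dt$ together with a uniform bound on the Gaussian isoperimetric profile.

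Concretely, pick $t = \log\frac{1}{1-\rho}$ and $t' = \log\frac{1}{1-\rho'}$, so that $1-e^{-t} = \rho$ and $1-e^{-t'} = \rho'$, and apply Lemma \ref{lem:isopermitrytonoise} to a single martingale $M$ with $M_0 = a$ evaluated at both times. This yields
\[
\Lambda_{\rho'}(a) - \Lambda_{\rho}(a) \;=\; \EE\bigl[M_{t'}^2 - M_t^2\bigr].
\]
Since $M$ is a martingale, Itô's formula gives $\EE[M_{t'}^2 - M_t^2] = \EE\!\int_t^{t'} d[M]_s = \EE\!\int_t^{t'} I(M_s)^2 \, ds$. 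In particular, the left-hand side is nonnegative, so $\Lambda_\rho(a)$ is monotone in $\rho$ and the absolute value in the statement can be dropped.

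Next I would use the uniform bound $I(s) = \Phi'(\Phi^{-1}(s)) \leq \Phi'(0) = \frac{1}{\sqrt{2\pi}}$ valid for all $s \in (0,1)$, which gives
\[
\Lambda_{\rho'}(a) - \Lambda_{\rho}(a) \;\leq\; \tfrac{1}{2\pi}(t' - t).
\]
Finally, compute $t' - t = \log\frac{1-\rho}{1-\rho'} = \log\!\left(1 + \tfrac{\rho' - \rho}{1-\rho'}\right)$ and apply the elementary inequality $\log(1+x) \leq x$ for $x \geq 0$ to obtain $t' - t \leq \frac{\rho' - \rho}{1-\rho'}$. Combining the last two estimates yields the claimed bound (in fact with an extra factor of $\frac{1}{2\pi}$ to spare, which is why the cruder statement of the lemma suffices).

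There is no real obstacle here; the only mildly subtle point is remembering that the conclusion is stated in the variable $\rho$ but the natural clock for the martingale is $t = \log\frac{1}{1-\rho}$, so one must convert between the two via $\log(1+x) \leq x$ at the end. Everything else follows from the isoperimetric identity for $[M]_t$ and the boundedness of $I$.
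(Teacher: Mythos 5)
Your proof is correct and follows essentially the same route as the paper's: both express the difference as $\EE\left[\int_t^{t'} I(M_s)^2\,ds\right]$ via Lemma \ref{lem:isopermitrytonoise} and the martingale property, bound $I$ uniformly, and convert $t'-t$ into $\frac{\rho'-\rho}{1-\rho'}$. The only cosmetic differences are that you use the sharper bound $I \leq \frac{1}{\sqrt{2\pi}}$ in place of the paper's $I \leq 1$, and $\log(1+x)\leq x$ in place of bounding the integral of $\frac{1}{1-x}$; both yield the stated inequality.
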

	\begin{proof}
		Set $t' = \ln\left(\frac{1}{1-\rho'}\right)$ and $t = \ln\left(\frac{1}{1-\rho}\right)$. Suppose that $M_0 = a$. By Lemma \ref{lem:isopermitrytonoise},
		\begin{align*}
			|\Lambda_{\rho'}(a) - \Lambda_{\rho}(a)| = \EE\left[[M]_{t'}\right] - \EE\left[[M]_t\right] = \EE\left[\int\limits_t^{t'}I(M_s)^2ds\right] \leq t'-t = \ln\left(\frac{1}{1-\rho'}\right) - \ln\left(\frac{1}{1-\rho}\right),
		\end{align*}
		where the inequality uses $I(x) \leq 1$. To complete the proof, note that $\ln(\frac{1}{1-x})' = \frac{1}{(1-x)}$, which implies the desired bound through integration.
	\end{proof}
	
	\subsection*{Level-$1$ inequality}
	The next ingredient in the proof will be an analogue of Claim \ref{claim:1}. In other words, we would like to show that, in a sense, the process $N_t$ moves faster than the process $M_t$, thus accumulating more quadratic variation. However, in this case, the situation is substantially more involved, as we know that the process assigned with the dictator function actually moves faster than the process associated with the majority function (this is essentially the content of Claim \ref{claim:1}). 
	
	It turns out that as long as the condition $\|\nabla f(X(t))\|_\infty = o(1)$ holds, the majority function has the largest quadratic variation. This is the content of the following theorem.
	\begin{theorem}[Level-1 inequality] \label{thm:level1}
		Fix $t>0$ and define
		\begin{equation} \label{eq:defkappat}
			\kappa_t := \max_i (\partial_i g(X(t)))^2.    
		\end{equation}
		Then,
		$$
		|\sigma_t \nabla g(X(t))| \leq C_1\frac{\sqrt{\kappa_t}}{I(g(X(t)))} + I(g(X(t))),
		$$
		for some absolute constant $C_1 > 0$.
	\end{theorem}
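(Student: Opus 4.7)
The plan is to identify $L := |\sigma_t \nabla g(X(t))|$ with the biased level-$1$ Fourier mass of $g$ at $x := X(t)$. From the computation carried out in the proof of Claim \ref{claim:1}, the biased Fourier coefficient at coordinate $i$ is $\hat{\tilde g}(\{i\}) = \sqrt{1-x_i^2}\,\partial_i g(x)$, so $L^2 = \sum_i \hat{\tilde g}(\{i\})^2$ and $\max_i |\hat{\tilde g}(\{i\})| \leq M := \sqrt{\kappa_t}$. Writing $\pi := \pi_x$ for the biased product measure on $\DC$ with marginal means $x_i$ and $p := g(x) = \EE_\pi[g]$, the theorem becomes a biased level-$1$ inequality $L \leq I(p) + C_1 M/I(p)$ for the Boolean function $g:\DC \to \{0,1\}$.

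A trivial reduction handles the regime $M \geq I(p)$: the Parseval-type inequality \eqref{eq:biasedpars} gives $L \leq \sqrt{p(1-p)} \leq 1/2$, which is dominated by $C_1 M/I(p) \geq C_1$ for any $C_1 \geq 1/2$. Henceforth I assume $M < I(p)$, reducing the goal to $L^2 \leq I(p)^2 + O(M)$ with sharp constant $1$ on the leading term. The main step is to combine the Mossel--O'Donnell--Oleszkiewicz invariance principle with a sharp Gaussian level-$1$ inequality. Express $g(Y) = Q(\tilde Y)$ as a multilinear polynomial in the normalized biased variables $\tilde Y_i := (Y_i - x_i)/\sqrt{1-x_i^2}$ (mean $0$, variance $1$ under $\pi$), with coefficients $\hat{\tilde g}(S)$. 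Replacing $\tilde Y$ by a standard Gaussian vector $Z$, the invariance principle controls the $L^2$-distance of $Q(Z)$ from $[0,1]$ by a polynomial in $M$, since $Q(\tilde Y) = g(Y) \in \{0,1\} \subset [0,1]$.

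In Gaussian space, I appeal to a sharp level-$1$ inequality: for any $h:\RR^n \to [0,1]$, orthogonally rotating the Hermite-coefficient vector $(\EE[hZ_1],\dots,\EE[hZ_n])$ onto the first coordinate and invoking univariate Gaussian rearrangement yields $\sqrt{\sum_i \EE[hZ_i]^2} \leq I(\EE h)$, with equality attained on halfspaces. The principal obstacle is that $Q(Z)$ may fail to lie in $[0,1]$; I would truncate to $\tilde Q := \min(\max(Q(Z),0),1) \in [0,1]$ and apply the sharp bound to $\tilde Q$. Since $\|Q(Z) - \tilde Q\|_{L^2}$ is bounded polynomially in $M$ by the invariance principle, both $\EE \tilde Q$ and the level-$1$ Hermite mass of $\tilde Q$ deviate from $p$ and $L^2$ by comparable amounts. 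Combined with a local Lipschitz-type estimate on $I$ near $p$, careful bookkeeping yields $L^2 \leq I(p)^2 + O(M)$, and the elementary inequality $\sqrt{I(p)^2 + O(M)} \leq I(p) + O(M)/I(p)$ then gives $L \leq I(p) + C_1 M/I(p)$ in the regime $M < I(p)$, completing the proof.
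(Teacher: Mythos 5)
Your setup is correct and matches the paper's starting point: $|\sigma_t\nabla g(X(t))|$ is exactly the degree-$1$ Fourier mass of $g$ with respect to the biased, normalized variables $\tilde Y_i = (Y_i-x_i)/\sqrt{1-x_i^2}$, and the reduction to the regime $M < I(p)$ via \eqref{eq:biasedpars} is fine. The gap is in the main step. The Mossel--O'Donnell--Oleszkiewicz invariance principle requires that the \emph{influences} $\mathrm{Inf}_i(Q)=\sum_{S\ni i}\hat{\tilde g}(S)^2$ be small (and, for a general Boolean function, that one first mollify with noise), whereas the hypothesis of Theorem \ref{thm:level1} only bounds $\kappa_t=\max_i(\partial_i g(X(t)))^2$, i.e., the \emph{degree-$1$} coefficients $\hat{\tilde g}(\{i\})^2$ at the single point $x=X(t)$. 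These are incomparable: for example $g(y)=\mathbf 1\{y_1y_2=1\}$ at $x=0$ has $\partial_1 g(0)=\partial_2 g(0)=0$, so $\kappa_t=0$, yet its influences in coordinates $1,2$ are $1/2$ and the invariance principle yields no nontrivial comparison with a Gaussian polynomial. The theorem must nonetheless hold for such $g$ (and does, trivially, since $L=0$ there), so no argument whose error term is controlled by the invariance principle can prove it under the stated hypothesis. A secondary concern: even granting small influences, the truncation-plus-bookkeeping step converts an $L^2$ closeness of $Q(Z)$ to $[0,1]$ into a bound on the level-$1$ Hermite mass only after Cauchy--Schwarz losses, so the claimed clean additive error $O(M)$ on $L^2$ with leading constant exactly $1$ would not come out of this route without substantial extra work.

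The paper avoids all of this by never Gaussianizing the full function. It sets $\theta=w/\|w\|_2$ with $w=\sigma_t\nabla g(X(t))$, uses the rearrangement Lemma \ref{lem:symmetr} to replace $g$ by the indicator of a halfspace $\{\langle\theta,y\rangle\ge\alpha\}$ of the same measure, and then compares only the one-dimensional law of the linear statistic $\langle\theta,Y\rangle$ to the standard Gaussian, via the monotone transport map and Rio's Wasserstein-$2$ CLT bound $\mathbf W_2^2(\gamma,\nu)\le C\sum_i\theta_i^4\,\EE[Y_i^4]$ (Theorem \ref{thm:isotropiclevelone}). This needs exactly the data you have --- coordinate fourth moments of the renormalized conditional law and smallness of the $\theta_i$, which is smallness of the degree-$1$ coefficients relative to $L$ --- and nothing about higher-degree Fourier weight. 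If you want to salvage your approach, you should replace the invariance principle for $Q$ by a CLT for the single linear form $\langle\theta,\tilde Y\rangle$; at that point you essentially recover the paper's argument.
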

	Using It\^o's formula, we have
	$$
	d [N]_t = |\sigma_t \nabla g(X(t))|^2 dt, 
	$$
	which immediately implies the bound
	\begin{equation} \label{eq:boundlvl1}
		d [N]_t \leq \left ( C_1\frac{\sqrt{\kappa_t}}{I(N_t)} + I(N_t) \right )^2 dt.
	\end{equation}
	
	In order to be able to apply this theorem, we will have to somehow argue that the process $\kappa_t$ (defined in its formulation) is small enough. For this, we will need a variant of the hypercontractivity inequality that holds for our notion of noise. We will use it to prove the following bound, for a noisy version of $g$.
	
	\begin{lemma} \label{lem:smallinfluences}
		Let $g : \{-1,1\}^n \to \{0,1\}$ be such that $\MAXINF(g)\leq \kappa$. Fix $\eps > 0$ and set $g_\eps = T_{e^{-\eps}}g$. Then, for all $\alpha, t > 0$, one has
		$$
		\PP \left (\max_{s \in [0,t], i \in [n]} |\partial_i g_\eps(X_s)| \geq \alpha \right ) \leq \left(\frac{1}{\alpha}\right)^{2 + e^{-t}}\frac{\kappa^{\frac{e^{-t}}{2}}}{\eps^2}\mathrm{Var}(g).
		$$
	\end{lemma}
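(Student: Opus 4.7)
The plan is to view $s\mapsto\partial_i g_\eps(X(s))$ as a martingale, apply Doob's $L^q$ maximal inequality, transfer the resulting expectation from $\nu_t^{\otimes n}$ to $\mu^n$, and finish with Bonami-Beckner hypercontractivity on $\DC$. For each $i\in[n]$, since $g_\eps=T_{e^{-\eps}}g$ is multilinear, $\partial_i g_\eps$ is a multilinear polynomial in $(x_j)_{j\neq i}$; because the coordinate processes $(X_j(s))_{j\neq i}$ are independent martingales, the same argument that made $N_t$ a martingale shows that $M^i_s:=\partial_i g_\eps(X(s))$ is itself a continuous martingale. Applying Doob's maximal inequality to the submartingale $|M^i_s|^q$ with $q:=2+e^{-t}$ gives
$$\PP\Bigl(\max_{s\in[0,t]}|M^i_s|\geq\alpha\Bigr)\leq\alpha^{-q}\,\EE\bigl[|\partial_i g_\eps(X(t))|^q\bigr],$$
so a union bound over $i$ reduces the problem to bounding $\sum_{i=1}^n\EE[|\partial_i g_\eps(X(t))|^q]$.

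Next I transfer from the law of $X(t)$ to the uniform measure $\mu^n$ on $\DC$. Each coordinate $X_j(s)$ is bounded in $[-1,1]$ and absorbed at $\pm 1$ (by the indicator in $\sigma_t$), so $X_j(\infty)\in\{-1,+1\}$; the martingale identity $\EE[X_j(\infty)]=0$ forces uniformity, and independence across coordinates yields $X(\infty)\sim\mu^n$. For any multilinear polynomial $h$, independence of the coordinate processes gives $h(X(s))=\EE\bigl[h(X(\infty))\,\big|\,\mathcal{F}_s\bigr]$, so Jensen applied to the convex function $|\cdot|^q$ produces
$$\EE\bigl[|\partial_i g_\eps(X(t))|^q\bigr]\leq\EE\bigl[|\partial_i g_\eps(X(\infty))|^q\bigr]=\|\partial_i g_\eps\|_{L^q(\mu^n)}^q.$$

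For the remaining bound on $\sum_i\|\partial_i g_\eps\|_{L^q(\mu^n)}^q$, I write $\partial_i g_\eps=e^{-\eps/2}T_{e^{-\eps/2}}(\partial_i g_{\eps/2})$ and invoke the Bonami-Beckner hypercontractive inequality on the inner semigroup: once $\eps$ is large enough that $e^{-\eps}(q-1)\leq 1$, one obtains $\|T_{e^{-\eps/2}}\partial_i g_{\eps/2}\|_q\leq\|\partial_i g_{\eps/2}\|_2=\sqrt{\INF_i(g_{\eps/2})}$. Using the small-influence bound $\INF_i(g_{\eps/2})\leq\INF_i(g)\leq\kappa$ to estimate $\INF_i(g_{\eps/2})^{q/2}\leq\kappa^{(q-2)/2}\INF_i(g_{\eps/2})$ and summing over $i$ produces
$$\sum_i\|\partial_i g_\eps\|_{L^q(\mu^n)}^q\leq\kappa^{e^{-t}/2}\sum_i\INF_i(g_{\eps/2}),$$
and the spectral-smoothing estimate $\sum_i\INF_i(g_{\eps/2})=\sum_A|A|\,\hat g(A)^2 e^{-\eps|A|}\lesssim\mathrm{Var}(g)/\eps$ then yields a bound proportional to $\kappa^{e^{-t}/2}\mathrm{Var}(g)/\eps$ in this regime.

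The main obstacle is upgrading this naive $\eps^{-1}$ to the claimed $\eps^{-2}$ and handling \emph{all} $\eps>0$, including the small-$\eps$ regime where Bonami-Beckner with exponents $(2,q)$ cannot be applied directly. I expect this to require exploiting the smoothing twice---for instance via the second-moment spectral bound $\sum_A|A|^2\hat g(A)^2 e^{-\eps|A|}\lesssim\mathrm{Var}(g)/\eps^2$, or by splitting the semigroup contraction more carefully---or interpolating with the pointwise bound $|\partial_i g|\leq\tfrac12$ when $\eps$ is very small. Fitting both regimes into a single polynomial-in-$1/\eps$ estimate is where the real technical work of the lemma sits.
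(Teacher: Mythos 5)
Your outer structure (martingale property of $\partial_i g_\eps(X(s))$, Doob's maximal inequality with exponent $q=2+e^{-t}$, union bound over $i$, and the spectral estimate $\sum_i\|\partial_i g_\eps\|_2^2\leq \mathrm{Var}(g)/\eps^2$ from $x^2e^{-2x}\leq 1$) matches the paper. But the central step --- the moment bound $\EE\bigl[|\partial_i g_\eps(X(t))|^{q}\bigr]\leq \kappa^{e^{-t}/2}\,\|\partial_i g_\eps\|_{L^2(\mu^n)}^2$ --- is where your argument has a genuine gap, and you have correctly identified that you cannot close it: Bonami--Beckner with noise rate $e^{-\eps}$ only upgrades $L^2$ to $L^q$ when $e^{-2\eps}(q-1)\leq 1$, i.e.\ $\eps\gtrsim\log(1+e^{-t})$, whereas in the application to Theorem \ref{thm:majoritymain} one takes $\eps=\kappa^{1/(12\beta)}\to 0$. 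So the only regime your hypercontractive step covers is the regime that is never used.

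The missing idea is that the hypercontractivity should come from the process itself, not from the $\eps$-smoothing. Your Jensen step $\EE[|\partial_i g_\eps(X(t))|^q]\leq\EE[|\partial_i g_\eps(X(\infty))|^q]$ throws away exactly the structure you need: at time $t$ each coordinate $X_j(t)$ has variance $1-e^{-t}<1$, and this deficit makes the law of $X(t)$ hypercontractive \emph{relative to} $\mu^n$ with the $t$-dependent exponent $2+e^{-t}$. This is the content of Lemma \ref{lem:hyper}: for any multilinear $f$, $\EE[|f(X(t))|^{2+e^{-t}}]^{1/(2+e^{-t})}\leq\EE[|f(X(\infty))|^2]^{1/2}$, proved by a coordinate-wise induction whose base case is an explicit Taylor computation using $\EE[X_1(t)^2]=1-e^{-t}$. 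Applying this to the multilinear function $\partial_i g_\eps$ gives $\EE[|\partial_i g_\eps(X(t))|^{2+e^{-t}}]\leq\|\partial_i g_\eps\|_{L^2(\mu^n)}^{2+e^{-t}}$ for \emph{every} $\eps>0$; one then peels off $\|\partial_i g_\eps\|_2^{e^{-t}}\leq\mathrm{Inf}_i(g)^{e^{-t}/2}\leq\kappa^{e^{-t}/2}$ and sums the remaining $\|\partial_i g_\eps\|_2^2$ over $i$ to obtain the $\mathrm{Var}(g)/\eps^2$ factor. The role of $\eps$ is thus confined to the elementary spectral bound, and no constraint of the form $\eps\gtrsim 1$ ever arises. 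Without an ingredient of this kind your proof does not go through, and the fallback interpolations you sketch (second-moment smoothing, or the pointwise bound $|\partial_i g|\leq\tfrac12$) do not produce the factor $\kappa^{e^{-t}/2}$, which is the whole point of the lemma.
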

	The two above bounds will be proven in Sections \ref{sec:level1} and \ref{sec:influences}, respectively.
	
	\subsection{Proof of Theorem \ref{thm:majoritymain}}
	We now have all the ingredients we need in order to obtain the improvement of the majority is stablest theorem. We first fix $\eps > 0$ to be some small number, and consider the noisy function $g_\eps:=T_{e^{-\eps}}g$. Note that $\EE_{\mu^n}[g_\eps] = \EE_{\mu^n}\left[g\right]$ and that $\mathrm{MaxInf}(g_\eps) \leq \mathrm{MaxInf}(g)$. Thus, with a slight abuse of notation, we consider the process with respect to the noisy function $N_t:= g_\eps(X(t)).$ We shall deduce noisy stability bounds for $g_\eps$, which, in turn, will imply a similar inequality for $g$.
	
	We apply Proposition \ref{prop:coupling} in order to define the processes $M_t$ and $N_t$ on the same probability space, along with a Brownian motion $(W_\tau)_{\tau \geq 0}$, such that
	$$
	M_t = W_{[M]_t}~~~ \mbox{and}~~~ N_t = W_{[N]_t}, ~~~ \forall t \geq 0.
	$$
	Equations \eqref{eq:QVM} and \eqref{eq:boundlvl1} give
	$$
	\frac{d}{dt} [M]_t = I(W_{[M]_t})^2, ~~ \frac{d}{dt} [N]_t \leq \left ( C_1\frac{\sqrt{\kappa_t}}{I(W_{[N_t]})} + I(W_{[N_t]}) \right )^2,
	$$
	where $\kappa_t$ is defined as in \eqref{eq:defkappat}. Our objective is to integrate those two inequalities while applying Lemma \ref{lem:smallinfluences} to ensure that $\kappa_t$ is small.
	
	Let $\beta> 0$ be a constant whose value will be chosen later. Consider the stopping times
	$$
	\tau = \inf \left \{t\geq 0: \kappa_t \geq \kappa^{\frac{2}{\beta}} \right\},
	$$
	and
	$$
	\tau' = \inf\{t\geq 0: I(N_t) \leq \kappa^{\frac{1}{4\beta}}\}.
	$$
	Let $\TTT(t)$ be the inverse of the monotone increasing function $t \to [M]_t$. Then, by the chain rule, we have for all $t \leq \tau' \wedge \tau$,
	\begin{align*}
		\frac{d}{dt} \TTT([N]_t) & \leq \frac{\left ( C_1\frac{\sqrt{\kappa_t}}{I(N_t)} + I(N_t) \right )^2}{I(N_t)^2} \\
		& \leq 1 + C_1^2 \frac{\kappa_t}{I(N_t)^4} + 2C_1\frac{\sqrt{\kappa_t}}{I(N_t)^2}\\
		& \leq 1 + 3 C^2_1 \kappa^{\frac{1}{2\beta}},
	\end{align*}
	where we used the fact that $I(\cdot) \leq 1$ and $\kappa_t \leq 1$ almost surely. It follows that, whenever $t \leq  \tau \wedge \tau'$,
	\begin{align*}
		[M]_t & = [M]_{\TTT([N]_t)} + [M]_t - [M]_{\TTT([N]_t)} \\
		& \geq [N]_t - \left ( [M]_t - [M]_{\TTT([N]_t)} \right )_+ \\
		& = [N]_t - \left ( \int_t^{\TTT([N]_t)} I(M_s)^2 ds \right )_+ \\
		& \geq [N]_t - \left ( \TTT([N]_t) - t \right )_+ \\
		& \geq [N]_t - \int_0^t \left (\frac{d}{ds} \TTT([N]_s) - 1 \right )_+ ds \geq [N]_t - 3 C^2_1 t \kappa^{\frac{1}{2\beta}}.
	\end{align*}
	Moreover, note that
	\begin{align*}
		\EE \left | [N]_{t} - [N]_{t \wedge \tau'} \right | & \leq \EE \left[ [N]_{\infty} - [N]_{\tau'} \right] \\
		& = \EE \bigl[ N_{\tau'} ( 1 - N_{\tau'} ) \bigr]\leq \EE\left[I(N_{\tau'})\right] \leq \kappa^{\frac{1}{4\beta}}.    
	\end{align*}
	where we use the fact that $I(s) \geq s(1-s)$. Combining the two last displays gives
	\begin{align*}
		\EE [N]_t & \leq \EE \left[[N]_t \mathbf{1}_{ \{t < \tau \wedge \tau' \}} \right] + \EE \left[ [N]_{t} \mathbf{1}_{ \{t \geq \tau \}} \right] + \EE \left[ [N]_{t \wedge \tau'} \mathbf{1}_{ \{\tau' \leq t \leq \tau \}} \right] +\EE \left | [N]_{t} - [N]_{t \wedge \tau'} \right | \\
        & \leq \EE \left[[N]_{t \wedge \tau \wedge \tau'} \right] + \EE \left[ [N]_{t} \mathbf{1}_{ \{t \geq \tau \}} \right] +\EE \left | [N]_{t} - [N]_{t \wedge \tau'} \right | \\  
		& \leq  \EE [M]_t + 3 C^2_1 t \kappa^{\frac{1}{2\beta}} +t\PP(t > \tau)  + \kappa^{\frac{1}{4\beta}}\\
		&\leq \EE [M]_t + t\PP(t > \tau) + (3C^2_1t + 1)\kappa^{\frac{1}{4\beta}}.
	\end{align*}
	To bound $\PP(t > \tau)$, we apply Lemma \ref{lem:smallinfluences} with $\alpha = \kappa^{\frac{1}{\beta}},$ and choose $\beta = \frac{4e^{-t} +5}{4e^{-t}}$, so that,
	$$\PP(t > \tau) \leq \left(\frac{1}{\alpha}\right)^{1 + \frac{e^{-t}}{2}}\frac{\kappa^{\frac{e^{-t}}{2}}}{\eps^2}\mathrm{Var}(g_\eps) \leq \frac{\kappa^{\frac{e^{-t}}{2}(1 - \frac{1}{\beta})-\frac{1}{\beta}}}{\eps^2} = \frac{\kappa^{\frac{1}{4\beta}}}{\eps^2}.$$
	Coupled with the previous computation, we obtain,
	$$\EE [N]_t \leq \EE [M]_t + \left(\frac{3C_1^2}{\eps^2}t + 1\right)\kappa^{\frac{1}{4\beta}}.$$
	Now, by Definition of $g_\eps$, and since $T_\cdot$ is a self-adjoint semigroup, we have,
	\begin{align*}
		\mathrm{Stab}_{\rho}(g_\eps) = \mathrm{Stab}_{\rho}(T_{e^{-\eps}}g) =  \int_{\DC} T_{e^{-\eps}}g T_\rho[T_{e^{-\eps}}g] d \mu^n = \int_{\DC} g T_{e^{-2\eps}\rho}[g] d \mu^n = \mathrm{Stab}_{e^{-2\eps}\rho}(g).
	\end{align*}
	Thus, since the above is true for any $\rho$ and $\eps$, from the relation \eqref{stabNt}, we see
	\begin{align*}
		\mathrm{Stab}_{e^{-2\eps}\rho}(g) = \EE \left[N_{\log\left(\frac{1}{1-\rho}\right)}^2 \right] \implies 	\mathrm{Stab}_{\rho}(g) = \EE \left[N_{\log\left(\frac{1}{1-e^{2\eps}\rho}\right)}^2 \right] ,
	\end{align*}
provided that $e^{2\eps}\rho<1$. This observation, along with Lemma \ref{lem:isopermitrytonoise}, then gives,
	\begin{align*}
		\mathrm{Stab}_{\rho}(g) &\leq \EE\left[M_{\log\left(\frac{1}{1-e^{2\eps}\rho}\right)}^2 \right] + \left(\frac{3C_1^2}{\eps^2}\log\left(\frac{1}{1-e^{2\eps}\rho}\right) + 1\right)\kappa^{\frac{1}{4\beta}}\\
		&= \Lambda_{e^\eps\rho}(\EE\left[g\right]) + \left(\frac{3C_1^2}{\eps^2}\log\left(\frac{1}{1-e^{2\eps}\rho}\right) + 1\right)\kappa^{\frac{1}{4\beta}}\\
		&\leq  \Lambda_{\rho}(\EE\left[g\right]) + \rho\frac{e^\eps - 1}{1-e^\eps\rho}+ \left(\frac{3C_1^2}{\eps^2}\log\left(\frac{1}{1-e^{2\eps}\rho}\right) + 1\right)\kappa^{\frac{1}{4\beta}}.
	\end{align*}
	where the second inequality is the Lipschitz bound from Lemma \ref{lem:lipschitzlambda}. Choose now $\eps = \kappa^{\frac{1}{12\beta}}$ and assume $\eps \leq \frac{1}{8}\ln(\frac{1}{\rho})$ to get
	\begin{align*}
		\mathrm{Stab}_{\rho}(g) \leq \Lambda_{\rho}(\EE\left[g\right]) +\frac{C}{1-\sqrt{\rho}}\kappa^{\frac{1}{12\beta}},
	\end{align*}
for some universal constant $C > 0$. Finally, since $t = \log\left(\frac{1}{1-\rho}\right)$, and since we have made the choice $\beta = \frac{4e^{-t} +5}{4e^{-t}} = \frac{9 - 4\rho}{4 - 4\rho}$, we have, 
$$\kappa^{\frac{1}{12\beta}} \leq  \kappa^{\frac{1-\rho}{27}},$$
which finishes the proof.

    \section{Level-$1$ inequalities} \label{sec:level1}

\def \vol {\mathrm{vol}}

Our goal in this section is to prove Theorem \ref{thm:level1}.  To this end, we will first prove a level-$1$ inequality for isotropic product measures.

\subsection{Level-$1$ inequality for subsets of isotropic product measures}

\begin{theorem} \label{thm:isotropiclevelone}
Let $\tilde{\xi}$ be an isotropic product distribution over $\mathbb{R}^n$.
So if $(Y_1,\ldots,Y_n)$ is drawn from $\tilde{\xi}$, $\EE[Y_i] = 0$ and $\EE[Y_i^2] = 1$.
Fix a function $S : \mathbb{R}^n \to [0,1]$ and let $\vol(S) = \EE_{Y \sim \tilde{\xi}}[S(Y)]$.
Then we have,
\begin{enumerate}
\item For every unit vector $\theta \in \mathbb{R}^n$, $\|\theta\|_2 = 1$,
\[ \int \langle \theta, y \rangle S(y) d\tilde{\xi}(y) \leq   I(\vol(S)) + C_1  \max_i (|\theta_i| \cdot \EE[Y_i^4]^{1/2}) \]
where $I : [0,1] \to [0,1]$ is defined as $I(s) = \Phi'(\Phi^{-1}(s))$
        for the Gaussian cumulative distribution function (CDF) $\Phi$ and $C_1$ is an absolute constant.
\item Moreover, 
\[
    \left\| \int y \cdot S(y) d \tilde \xi(y)\right \|_2 \leq I(\vol(S)) + \frac{C_1 \cdot \max_i(|\EE[Y_i \cdot S(Y)]| \cdot \EE[Y_i^4]^{1/2})}{I(\vol(S))} \]

\end{enumerate}
\end{theorem}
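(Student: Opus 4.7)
I would deduce Part 2 from Part 1 and prove Part 1 via a one-dimensional reduction followed by a quantile coupling to the standard Gaussian, controlled by the Wasserstein-$1$ version of Berry--Esseen. For Part 2, setting $s_i := \int y_i S(y)\, d\tilde\xi(y)$ and applying Part 1 to the unit vector $\theta = s/\|s\|_2$ (the case $s = 0$ is trivial) yields
\[
\|s\|_2 \;\le\; I(\vol(S)) + \frac{C_1 \max_i(|s_i|\, \EE[Y_i^4]^{1/2})}{\|s\|_2},
\]
after which a case split on whether $\|s\|_2$ exceeds $I(\vol(S))$ delivers Part 2 (in the opposite regime the stated bound is trivial).

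\textbf{One-dimensional reduction and rearrangement.} Set $Z := \langle\theta, Y\rangle$; by isotropy and $\|\theta\|_2 = 1$, $Z$ has mean $0$ and variance $1$. Conditioning on $Z$ with $T(z) := \EE[S(Y) \mid Z = z]$ gives
\[
\int\langle\theta, y\rangle S(y)\, d\tilde\xi(y) \;=\; \EE[Z\, T(Z)], \qquad T \in [0,1], \quad \EE[T(Z)] = \vol(S) =: \alpha.
\]
By Hardy--Littlewood rearrangement (the supremum of $\EE[Z h(Z)]$ over $h : \mathbb{R} \to [0,1]$ with prescribed mean $\alpha$ is attained by a threshold indicator), $\EE[Z T(Z)] \le \EE[Z\, \mathbf{1}\{Z \ge q\}]$, where $q$ is the upper $\alpha$-quantile of $Z$.

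\textbf{Quantile coupling to Gaussian.} Couple $(Z, G)$ with $G$ standard Gaussian via the monotone (quantile) transport, so that $\EE[|Z - G|] = W_1(Z, G)$ and crucially $\{Z \ge q\} = \{G \ge q^*\}$ almost surely, where $q^* := \Phi^{-1}(\alpha)$. Then
\[
\EE[Z\mathbf{1}\{Z \ge q\}] - \EE[G\mathbf{1}\{G \ge q^*\}] \;=\; \EE[(Z - G)\mathbf{1}\{Z \ge q\}] \;\le\; W_1(Z, G),
\]
and the Gaussian side equals $\phi(q^*) = I(\alpha)$ via $\int_t^\infty z\phi(z)\,dz = \phi(t)$. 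The $W_1$-version of Berry--Esseen (provable by Stein's method) supplies
\[
W_1(Z, G) \;\le\; C \sum_i |\theta_i|^3\, \EE[|Y_i|^3] \;\le\; C \max_i\bigl(|\theta_i|\, \EE[Y_i^4]^{1/2}\bigr),
\]
using $|\theta_i|^3 \le \theta_i^2 \max_j |\theta_j|$, $\sum\theta_i^2 = 1$, and $\EE[|Y_i|^3] \le \EE[Y_i^4]^{1/2}$ (Cauchy--Schwarz with $\EE[Y_i^2] = 1$). Combining these estimates gives Part 1.

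\textbf{Main obstacle.} The critical ingredient is the $W_1$-version of Berry--Esseen rather than the Kolmogorov version: the quantile coupling automatically weights the CDF gap between $Z$ and $G$ by the Gaussian density, keeping the final error of order $\max_i |\theta_i|\, \EE[Y_i^4]^{1/2}$ uniformly in $\alpha$. A naive uniform CDF bound combined with truncation at the quantile introduces an extra factor of order $1 + |q^*|$ that blows up as $\alpha \to 0$ or $\alpha \to 1$ (where $|q^*|$ is large), which would weaken the bound outside a bounded range of $\alpha$. Ensuring the $W_1$-bound with an absolute constant (and handling atoms of $Z$ when $Y_i$ are discrete via partial threshold indicators in the rearrangement step) is the technical heart of the argument.
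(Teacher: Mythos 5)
Your proposal is correct and follows essentially the same route as the paper: reduce to the one-dimensional marginal $Z=\langle\theta,Y\rangle$, replace $S$ by a threshold indicator via rearrangement (the paper's Lemma \ref{lem:symmetr}), transport to the Gaussian by the monotone quantile map, control the error by a Wasserstein CLT bound, and deduce Part 2 from Part 1 by taking $\theta = w/\|w\|_2$ with a case split on $\|w\|_2$ versus $I(\vol(S))$. The only cosmetic difference is that the paper bounds the transport error by $\mathbf{W}_2(\gamma,\nu)$ and invokes Rio's fourth-moment $\mathbf{W}_2$ bound, whereas you use the $\mathbf{W}_1$ Berry--Esseen with third moments; both give the same final estimate, and the atom issue you flag is handled in the paper by the careful (one-sided) definition of the quantile threshold.
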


\noindent	Before delving into the proof of Theorem \ref{thm:isotropiclevelone}, let us first begin with a technical Lemma.
	\begin{lemma} \label{lem:symmetr}
		Let $\eta$ be a measure on $\RR^n$ and let $m:\RR^n \to [0,1]$. 
		Fix $\theta \in \RR^n$ and let $\alpha \in \RR$ be such that
		$$\int_{\RR^n}m(x)d\eta(x) \leq \eta\left(\left\{x|\langle x,\theta \rangle \geq \alpha \right\}\right) ,$$
		when $\alpha \geq 0$, and,
		$$\int_{\RR^n}m(x)d\eta(x) \geq \eta\left(\left\{x|\langle x,\theta \rangle \geq \alpha \right\}\right) ,$$
		when $\alpha < 0$.
		Then,
		$$\int_{\RR^n}\langle x, \theta\rangle m(x)d\eta(x) \leq \int_{\RR^n}\langle x, \theta\rangle {\bf 1}_{\langle x, \theta \rangle \geq \alpha}d\eta(x).$$
	\end{lemma}
	\begin{proof}
		Observe that with the above choice of $\alpha$,
		$$\alpha\int_{\RR^n}m(x)d\eta(x) \leq \alpha\int_{\RR^n} {\bf 1}_{\langle x, \theta \rangle \geq \alpha}d\eta(x).$$
		Hence,
		$$\int_{\RR^n}\langle x, \theta\rangle (m(x)-{\bf 1}_{\langle x, \theta \rangle \geq \alpha})d\eta(x) \leq \int_{\RR^n}(\langle x, \theta\rangle -\alpha) (m(x)-{\bf 1}_{\langle x, \theta \rangle \geq \alpha})d\eta(x).$$
		Since $0\leq m(x) \leq 1$, we have that  $(m(x) - {\bf 1}_{\langle x, \theta \rangle \geq \alpha}) \leq 0$ if and only if $(\langle x, \theta\rangle -\alpha) \geq 0$. Thus,
		$$\int_{\RR^n}(\langle x, \theta\rangle -\alpha) (m(x)-{\bf 1}_{\langle x, \theta \rangle \geq \alpha})d\eta(x)\leq 0,$$
		which completes the proof.
	\end{proof}

We are now ready to show our level-1 inequality for isotropic product measures.
\begin{proof}[Proof of Theorem \ref{thm:isotropiclevelone} (1)]
    By Lemma \ref{lem:symmetr}, we have,
		$$
		 \int \langle \theta, y \rangle S(y) d\tilde \xi(y)  \leq \int_\alpha^\infty t d \nu(t)
		$$
		where $\nu$ denotes the push-forward of $\tilde \xi_t$ under the map $x \to \langle x, \theta \rangle$ and $\alpha$ satisfies: 
		\begin{equation} \label{eq:alphaval}
			\nu\left(\left\{x| x\geq \alpha\right\}\right) \geq \vol(S) ,
		\end{equation}
		when $\alpha \geq 0$, and the reverse inequality when $\alpha < 0$. We continue the proof assuming $\alpha \geq 0$. The proof when $\alpha < 0$ is analogous, and there is no loss of generality.
	
    
    We denote the CDF of $\nu$ by $F_\nu(t) :=  \nu\left(\left\{x| x\geq t\right\}\right)$ and its quantile function by $Q_\nu(t):= \sup\left\{x\in \RR| t\geq F_\nu(x)\right\}$.
		Now, let $T: = Q_\nu\circ \Phi$.
		It is readily verified that $T$ is non-decreasing and that $T(z)$ has the law $\nu$ if $z$ is a Gaussian. 
		
			Moreover, if $x \in \RR$ is such that $T(x) \geq \alpha$, then $x \geq \Phi^{-1}(\vol(S))$. Indeed, by the definition of $Q_\nu$, we have,
		$$Q_\nu(\Phi(x)) \geq \alpha \implies \Phi(x) \geq F_\nu(\alpha) \implies \Phi(x) \geq \vol(S) \implies x \geq \Phi^{-1}(\vol(S)),$$
		where the second implication is \eqref{eq:alphaval}.
		So, with a change of variables,
		\begin{align} \label{eq:level1T}
		\int_\alpha^\infty t d \nu(t)
& =  \int\limits_{T^{-1}([\alpha, \infty))} T(t)d\gamma(t) \\
& \leq \int\limits_{\Phi^{-1}(\vol(S))}^\infty T(t)d\gamma(t)\nonumber\\
			&\leq \int\limits_{\Phi^{-1}(\vol(S))}^\infty td\gamma(t) + \int\limits_{-\infty}^\infty |t-T(t)|d\gamma(t)\nonumber\\
			&=  I(\vol(S)) + \int\limits_{-\infty}^\infty |t-T(t)|d\gamma(t),
		\end{align}
		where the equality uses integration by parts.
		Thus, to finish the proof, we need to bound 
		$$\int\limits_{-\infty}^\infty |t-T(t)|d\gamma(t)\leq \sqrt{\int\limits_{-\infty}^\infty |t-T(t)|^2d\gamma(t)}.$$
		Observe that since $T$ is monotone, it is a derivative of a convex function. Hence, by \cite[Theorem 9.4]{villani2008optimal}, $T$ is the optimal-transport map from $\gamma$ to $\nu$. In other words,
		\begin{equation} \label{eq:wasdef}
			\mathbf{W}_2(\gamma,\nu) = \sqrt{\int\limits_{-\infty}^\infty |t-T(t)|^2d\gamma(t)},
		\end{equation}
		where $\mathbf{W}_2$ stands for the quadratic Wasserstein distance (see \cite{villani2008optimal} for further details on optimal transport). To bound $\mathbf{W}_2$, we shall invoke known results about the central limit theorem.
		
				The central limit theorem in \cite[Theorem 4.1]{rio2009upper} gives
		$$
		\mathbf{W}_2^2 (\gamma, \nu) \leq C L_4,
		$$
		for some absolute constant $C > 0$, and where 
		\begin{align*}
			L_4 &= \sum_i \theta_i^4 \EE |Y_i|^4 \leq   \max_i (\theta_i^2 \cdot \EE[Y_i^4]) 
		\end{align*}			
        Substituting in \eqref{eq:level1T}, we conclude that,
        \begin{align} \label{eq:isoeq1}
        \int \langle \theta, y \rangle S(y) d \tilde \xi(y) \leq I(\vol(S)) + C \max_i (|\theta_i| \cdot \EE[Y_i^4]^{1/2}) 
        \end{align}
    \end{proof}

\begin{proof}[Proof of Theorem \ref{thm:isotropiclevelone} (2)]
        Fix $w = \int y \cdot S(y) d \tilde \xi(y)$ and let $\theta = w / \| w\|_2$.  Then using \eqref{eq:isoeq1}, 
        \begin{align*}
            \| w \|_2 = \int \langle\theta,y \rangle S(y) d \tilde \xi(y) 
            &  \leq I(\vol(S)) + C \max_i (|\theta_i| \cdot \EE[Y_i^4]^{1/2}) \\
            & = I(\vol(S)) + \frac{C \cdot \max_i(|w_i| \cdot \EE[Y_i^4]^{1/2})}{\|w\|_2}
        \end{align*}
        If $\|w\|_2 \leq I(\vol(S))$ then the claim is obvious.  Without loss of generality, assume $\|w\|_2 \geq I(\vol(S))$, which implies that,
        \begin{align*}
            \| w \|_2 
            & \leq I(\vol(S)) + \frac{C \cdot \max_i(|w_i| \cdot \EE[Y_i^4]^{1/2})}{I(\vol(S))}
        \end{align*}
        \end{proof}

Now we are finally ready to wrap up the proof of Theorem \ref{thm:level1}.  


    \begin{proof}[Proof of Theorem \ref{thm:level1}]
		Let $\xi_t$ be the law of $X(\infty) | X(t)$ and let $\tilde \xi_t$ be the push forward of $\xi_t$ under the linear map $L(x) = \sigma_t^{-1} (x - X(t))$. By Lemma \ref{lem:momentsprocess} $\tilde{\xi}_t$ is isotropic, it is centered and its covariance matrix is the identity. The martingale property of $X(t)$ also implies that for $i = 1,\dots, n$, $$\PP(X_i(\infty) = 1|X_i(t))= \frac{1 + X_i(t)}{2} \text{ and } \PP(X_i(\infty) = -1|X_i(t))= \frac{1 - X_i(t)}{2}.$$
		Thus, a straightforward calculation shows,
		$$
		\sigma_t \nabla g(X(t)) = \int_{\{-1,1\}^n}L(x)g(x)d\xi_t(x) = \int_{L \left (\{-1,1\}^n\right ) } x S( x ) d\tilde \xi_t (dx),
		$$
		where $S = g \circ L^{-1}$.  We will now apply the level-1 inequality (Theorem \ref{thm:isotropiclevelone}) for isotropic product measures on the distribution $\tilde \xi_t$.  
		To this end, we begin by calculating the fourth moment of the coordinates of $Y \sim \tilde \xi_t$.  
		\begin{align*}
			\EE[Y_i^4] = \EE\left[\frac{(X_i(\infty) - X_i(t))^4}{(1-X_i(t)^2)^2}|X(t)\right] &= \frac{1+X_i(t)}{2}\frac{(1-X_i(t))^4}{(1-X_i(t)^2)^2} + \frac{1-X_i(t)}{2}\frac{(-1-X_i(t))^4}{(1-X_i(t)^2)^2}\\
			&= \frac{(1-X_i(t))^3}{2(1-X_i(t)^2)} + \frac{(1+X_i(t))^3}{2(1-X_i(t)^2)}\\
			&= \frac{1+3X_i(t)^2}{1-X_i(t)^2} \leq 4(\sigma_t)^{-2}_{i,i}.
		\end{align*}

		Now using Theorem \ref{thm:isotropiclevelone} on the distribution $\tilde \xi_t$ and the function $S = g \circ L^{-1}$, we get the desired claim.
		\begin{align*}
		 \| \sigma_t \nabla g(X(t)) \|_2 & \leq I(g(X_t)) + C_1 \cdot \frac{\max_i\left(|(\sigma_t)_{i,i} \partial_i g(X_t)| \cdot ((4\sigma_t)^{-2}_{i,i})^{1/2}\right)}{ I(g(X_t))} \\
		 & \leq I(g(X_t)) + 2C_1 \cdot \frac{\max_i |\partial_i g(X_t)| }{ I(g(X_t))} 
		\end{align*}
		
\end{proof}

	\section{Influences remain small} \label{sec:influences}
	This section aims to show that influences remain small, with high probability, over the paths of $X(t)$. Recall that we are dealing with $g_\eps$, a noisy version of $g$.
	In particular, $g_\eps$ has the following Fourier expansion,
	\begin{equation} \label{eq:fourierdecomp}
	    g_\eps = \sum\limits_{A}e^{-\eps|A|}\hat{g}(A)\chi_A.
	\end{equation}
	
	\begin{proof}[Proof of Lemma \ref{lem:smallinfluences}]
		From the representation in \eqref{eq:fourierdecomp} it is readily seen, 
		\begin{equation*}
			\EE\left[\|\nabla T_{e^{-\eps}} g(X(\infty))\|_2^2\right] \leq \sum\limits |A|^2e^{-2\eps|A|}\hat{g}(A)^2 \leq \frac{1}{\eps^2}\sum\limits_{|A| > 0} \hat{g}(A)^2 = \frac{\mathrm{Var}(g(X(\infty)))}{\eps^2},
		\end{equation*}
		where the inequality follows since $x^2e^{-2x} \leq 1.$
		
		For $i \in [n]$ compute,
		\begin{align*}
			\EE\left[|\partial_i T_{e^{-\eps}}g(X(t))|^{2 + e^{-t}}\right] &\leq \EE\left[|\partial_i  T_{e^{-\eps}}g(X(\infty))|^{2}\right]^{1 + \frac{e^{-t}}{2}}\leq \EE\left[|\partial_i T_{e^{-\eps}}g(X(\infty))|^{2}\right]\EE\left[|\partial_i g(X(\infty))|^{2}\right]^{\frac{e^{-t}}{2}}\\
			&= \EE\left[|\partial_i T_{e^{-\eps}} g(X(\infty))|^{2}\right]\mathrm{Inf}_i(g)^{\frac{e^{-t}}{2}}\leq \EE\left[|\partial_i T_{e^{-\eps}} g(X(\infty))|^{2}\right]\kappa^{\frac{e^{-t}}{2}}.
		\end{align*}
		The first inequality is hypercontractivity, Lemma \ref{lem:hyper}, applied to $T_{e^{-\eps}} g$.
		Hence, by first applying a union bound and then Doob's maximal inequality to the submartingales $|\partial_i g_\eps(X_s)|^{2 + \frac{e^{-t}}{2}}$,
		
		\begin{align*}
			\PP \left (\max_{s \in [0,t], i \in [n]} |\partial_i g_\eps(X_s)| \geq \alpha \right ) &\leq \sum\limits_i\PP \left (\max_{s \in [0,t]} |\partial_i g_\eps(X_s)| \geq \alpha \right )\leq \frac{1}{\alpha^{2 + e^{-t}}}\sum\limits_i \EE\left[|\partial_i g_\eps(X(t))|^{2 + e^{-t}}\right] \\
			&\leq  \frac{\kappa^{\frac{e^{-t}}{2}}}{\alpha^{2 + e^{-t}}}\EE\left[\|\nabla T_{e^{-\eps}} g\|_2^{2}\right] \leq \frac{\kappa^{\frac{e^{-t}}{2}}\mathrm{Var}(g)}{\alpha^{2 + e^{-t}}\eps^2}.
		\end{align*}
	\end{proof}
	\section{Further properties of the noise process} \label{sec:properties}
	This section records several useful properties of the process $X(t)$.
	Recall that the process satisfies the following $SDE$:
	$$
	d X(t) = \sigma_t d B_t, X(0) = 0,
	$$
	where $B_t$ is a standard Brownian motion in $\RR^n$ and $\sigma_t$ is the diagonal matrix with 
	$$
	(\sigma_t)_{i,i} := \sqrt{(1+X_i(t)) (1-X_i(t))} \mathbf{1}_{|X_i(t)| \leq 1}.
	$$ 
	Let us prove an immediate consequence of the definition, which shall also explain the choice of matrix $\sigma_t$.
	\begin{lemma} \label{lem:momentsprocess}
		The process $X(t)$ is  martingale which converges almost surely to a limit $X(\infty) \sim \mu^n$, uniform on $\{-1,1\}^n.$
		Moreover, if we fix $t \geq 0$ and let $Y_t$ have the (random) law of $X(\infty)|X(t)$. Then,
		$$\EE\left[Y_t\right] = X(t), \text{ and  } \mathrm{Cov}(Y_t) = \sigma_t^2.$$
	\end{lemma}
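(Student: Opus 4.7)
The plan is to establish, in order, (i) the martingale property with coordinatewise independence, (ii) almost sure convergence to a point in $\{-1,+1\}^n$, (iii) the uniform limiting law, and then (iv) read off the conditional mean and covariance directly from the conditional law of $X(\infty)\mid X(t)$.

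First I would observe that the SDE has zero drift and $\sigma_t$ is bounded in operator norm by $1$, so $X(t)$ is a true (not merely local) martingale. Because $\sigma_t$ is diagonal and $(\sigma_t)_{ii}$ depends only on $X_i(t)$, the SDE decouples into one-dimensional SDEs $dX_i(t) = \sqrt{(1-X_i(t)^2)}\,\mathbf{1}_{|X_i(t)|\leq 1}\,dB_i(t)$ driven by independent Brownian motions; hence the coordinates $X_1(t),\dots,X_n(t)$ are mutually independent for every $t$.

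Next I would prove convergence. Each $X_i(t)$ is a bounded martingale, so by Doob's theorem it converges almost surely and in $L^2$ to some $X_i(\infty)\in[-1,1]$. The quadratic variation is
\[
[X_i]_\infty \;=\; \int_0^\infty (1-X_i(s)^2)\,\mathbf{1}_{|X_i(s)|\leq 1}\,ds,
\]
whose expectation equals $\EE[X_i(\infty)^2]\leq 1$ and is therefore a.s.\ finite. Since the integrand converges a.s.\ to $1-X_i(\infty)^2$, finiteness of the integral forces $1-X_i(\infty)^2=0$, i.e.\ $X_i(\infty)\in\{-1,+1\}$ a.s. The martingale identity $\EE X_i(\infty)=X_i(0)=0$ then gives $\PP(X_i(\infty)=\pm 1)=1/2$, and independence across coordinates yields $X(\infty)\sim\mu^n$.

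For the conditional part, I would apply the strong Markov property: conditionally on $X(t)=x$, the shifted process $\{X(t+s)\}_{s\geq 0}$ satisfies the same SDE started at $x$, so by the same argument $X_i(\infty)\mid X(t)$ takes values in $\{-1,+1\}$ with conditional mean $X_i(t)$, hence $\PP(Y_{t,i}=\pm 1\mid X(t))=(1\pm X_i(t))/2$. This immediately gives $\EE[Y_t]=X(t)$ and $\mathrm{Var}(Y_{t,i}\mid X(t))=1-X_i(t)^2=(\sigma_t)^2_{ii}$. Conditional independence of coordinates (inherited from the decoupled SDE started from $x$) kills the off-diagonal entries, so $\mathrm{Cov}(Y_t)=\sigma_t^2$. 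The only step requiring care is the passage from $L^2$-boundedness of the quadratic variation to the claim $X_i(\infty)\in\{-1,+1\}$; everything else is a routine consequence of the martingale property and independence of coordinates.
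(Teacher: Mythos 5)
Your proof is correct and follows essentially the same route as the paper's: bounded martingale convergence per coordinate, the observation that the limit must lie in $\{-1,1\}$, the mean-zero identity to get the uniform law, and the martingale/Markov property for the conditional moments. Your quadratic-variation argument (finiteness of $\EE\bigl[\int_0^\infty(1-X_i(s)^2)\,ds\bigr]$ forcing $1-X_i(\infty)^2=0$) is a welcome rigorous justification of a step the paper dismisses as ``clear from the definition of $\sigma_t$''; otherwise the two arguments coincide.
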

	\begin{proof}
		First note that, for every $i = 1,\dots,n$, by construction $X_i(t) \in [-1,1]$ almost surely. Thus, by the martingale convergence theorem $\lim\limits_{t\to \infty}X_{i}(t)= X_i(\infty)$, for some random variable $X_i(\infty)$. From the definition of $(\sigma_t)_{i,i}$ it is clear than $X_i(\infty)$ is supported on $\{-1,1\}$, and, since $\EE\left[X_i(\infty)\right] = \EE[X_i(0)] = 0$ we conclude that $X_i(\infty)$ is uniform on $\{-1,1\}$. Since $X(t)$ has independent coordinates, the claim about $X(\infty)$ follows.
		
		For the second part, $X(t)$ is a martingale, hence $\EE\left[Y_t\right]=\EE\left[X(\infty)|X(t)\right] = X(t)$, and because $X(\infty)$ is uniform on $\{-1,1\}^n$, we have $\mathrm{Cov}(X(\infty)|X(t))_{i,i} = \EE\left[X_i(\infty)^2|X_i(t)\right] = (1-X_i(t)^2) = \sigma_{i,i}^2$, for every $i = 1,\dots, n$. The off-diagonal elements of $\mathrm{Cov}(X(\infty)|X(t))$ are $0$, since $X(t)$ has independent coordinates. 
	\end{proof}
	The process $X(t)$ also possesses hyper-contractive properties. Below we prove such a result, which is used in the proof of Lemma \ref{lem:smallinfluences}.
	\begin{lemma} \label{lem:hyper}
		For any multi-linear $f:[-1,1]^n \to \RR$ and $t > 0$, if $\delta(t) = e^{-t}$, then
		$$\EE\left[|f(X(t))|^{2+\delta(t)}\right]^{\frac{1}{2+\delta(t)}} \leq \EE\left[|f(X({\infty}))|^{2}\right]^{\frac{1}{2}}.$$
	\end{lemma}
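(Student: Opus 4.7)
My plan is to prove Lemma \ref{lem:hyper} by induction on $n$, in the spirit of the Bonami--Beckner tensorization proof of hypercontractivity on the Boolean cube: tensorization reduces the claim to a one-variable ``two-point''-type inequality for the diffusion $X_1(t)$, which is the base case of the induction. Concretely, for the inductive step I would write the multilinear $f:\CC\to\RR$ as
\[
f(x_1,\ldots,x_n) = g(x_2,\ldots,x_n) + x_1\, h(x_2,\ldots,x_n),
\]
where $g,h$ are multilinear in the remaining $n-1$ coordinates. Since $X_1(t)$ is independent of $X'(t):=(X_2(t),\ldots,X_n(t))$, conditioning on $X'(t)$ and invoking the $n=1$ case with the (random) coefficients $G:=g(X'(t))$, $H:=h(X'(t))$ yields
\[
\EE\bigl[|G+X_1(t)\,H|^{p(t)}\bigm|X'(t)\bigr]\le (G^2+H^2)^{p(t)/2},
\]
where $p(t):=2+e^{-t}$. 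Taking outer expectation and raising to the $1/p(t)$ power, $\|f(X(t))\|_{p(t)}^2\le\|G^2+H^2\|_{L^{p(t)/2}}$. Since $p(t)/2\ge 1$, Minkowski's inequality in $L^{p(t)/2}$ bounds this by $\|G\|_{p(t)}^2+\|H\|_{p(t)}^2$, and the inductive hypothesis applied to $g,h$ further bounds the sum by $\|g(\varepsilon')\|_2^2+\|h(\varepsilon')\|_2^2$, which equals $\|f(\varepsilon)\|_2^2$ by Pythagoras.

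The base case $n=1$ is the crux: one must show $\|a+bX_1(t)\|_{2+e^{-t}}\le (a^2+b^2)^{1/2}$ for arbitrary $a,b\in\RR$. My approach is Gross's differential method. Apply It\^o's formula to $|Y_t|^{p(t)}$ for $Y_t:=a+bX_1(t)$, using $d[Y]_t=b^2(1-X_1(t)^2)\,dt$, to obtain
\[
\tfrac{d}{dt}\EE[|Y_t|^{p(t)}] = \tfrac{p(p-1)b^2}{2}\,\EE\bigl[|Y_t|^{p-2}(1-X_1(t)^2)\bigr]+p'(t)\,\EE[|Y_t|^{p(t)}\log|Y_t|].
\]
The aim is to combine this with the $t$-derivative of the normalization and show that $t\mapsto\|Y_t\|_{p(t)}$ is non-decreasing. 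Since $X_1(\infty)$ is Rademacher and $p(\infty)=2$, the limit $\|Y_\infty\|_{p(\infty)}$ equals $(a^2+b^2)^{1/2}$, and monotonicity then yields the claimed bound for all finite $t$.

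The main obstacle is the base case: the monotonicity reduces to a modified log-Sobolev-type inequality for the time-evolving marginal $\nu_t$ of $X_1(t)$, roughly controlling the entropy of $|Y|^{p}$ under $\nu_t$ by $\EE_{\nu_t}[|Y|^{p-2}(1-X_1^2)]$ with the right absolute constant. The exponent $\delta(t)=e^{-t}$ is calibrated precisely to the variance-leak rate $\EE[1-X_1(t)^2]=e^{-t}$ of the renormalized Brownian motion (cf.\ Lemma \ref{lem:noisetonoise}), so that the (negative, since $p'(t)<0$) entropy contribution arising from the time-dependence of $p(t)$ is exactly balanced by the (positive) quadratic variation contribution. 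Making this cancellation quantitative is where the specific form of the SDE driving $X(t)$ enters the proof, and is where most of the technical work should go.
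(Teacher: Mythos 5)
Your tensorization step is sound and is essentially the paper's inductive step in a slightly different guise: you condition on the remaining coordinates, apply the one-dimensional inequality with random coefficients, and use Minkowski in $L^{p(t)/2}$, whereas the paper applies the induction hypothesis conditionally twice and uses Minkowski's integral inequality; both reduce the lemma to the case $n=1$. The problem is that you have not actually proven the base case, and you say so yourself ("the main obstacle", "where most of the technical work should go"). Tracing your Gross-type differential argument through, monotonicity of $t \mapsto \|a+bX_1(t)\|_{p(t)}$ with $p(t)=2+e^{-t}$ is equivalent to the defective log-Sobolev-type inequality
$$
\frac{-p'(t)}{p(t)}\,\mathrm{Ent}\left[|Y_t|^{p(t)}\right] \;\leq\; \frac{p(t)(p(t)-1)b^2}{2}\,\EE\left[|Y_t|^{p(t)-2}\bigl(1-X_1(t)^2\bigr)\right],
$$
for the marginal law $\nu_t$ of $X_1(t)$. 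This inequality is neither stated precisely nor established in your proposal, and it is not an off-the-shelf fact: $\nu_t$ is only implicitly defined by the SDE, no Bakry--\'Emery/curvature structure is identified for the (time-inhomogeneous) one-dimensional process, and the constant must come out exactly right for the $p'$-term to be absorbed. As written, the proof is therefore incomplete at precisely the point where the lemma's content lies.

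For comparison, the paper's one-dimensional argument sidesteps any functional inequality for $\nu_t$ and uses only two facts: $\nu_t$ is symmetric with $\EE[X_1(t)^2]=1-e^{-t}$ (so the first and third moments vanish), and $\frac{d^4}{dx^4}(1+x)^{2+\delta}\leq 0$ for $x>-1$ and $\delta\leq 1$, so a third-order Taylor expansion gives the upper bound
$$
\EE\left[(1+aX_1(t))^{2+\delta}\right] \;\leq\; 1 + a^2\,\frac{(2+\delta)(1+\delta)}{2}\,(1-e^{-t}),
$$
for nonnegative affine functions $1+ax$, $|a|\leq 1$; the calibration $\delta(t)=e^{-t}$ then enters through $(1+\delta(t))(1-e^{-t})=1-e^{-2t}\leq 1$, which is all that is needed to conclude $\|1+aX_1(t)\|_{2+\delta(t)}^2\leq 1+a^2$. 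General affine functions are reduced to nonnegative ones by comparison with the affine $g$ satisfying $g(\pm 1)=|f(\pm 1)|$ --- a reduction your two-point inequality for arbitrary signs of $a,b$ also requires and which you should make explicit. If you replace your differential scheme for the base case by this elementary moment computation (or, alternatively, actually prove the entropy inequality above), the rest of your argument goes through.
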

	\begin{proof}
		The proof goes by induction. We first show that the claim in dimension $n$ can be reduced to the same claim in dimension $n-1$. The second part of the proof will be to prove the claim when $n=1$.
		\paragraph{Inductive argument:}
		If $f:[-1,1]^n \to \RR$ is multi-linear, we may write it as $f(x)= x_nf'(\tilde{x}) + f''(\tilde{x})$, where $f',f'':[-1,1]^{n-1}$ are multi-linear and $\tilde{x} = (x_1,\dots x_{n-1})$. Thus, writing $\tilde{X}(t) =(X_1(t),\dots X_{n-1}(t))$,
		\begin{align*}
			\EE\left[|f(X(t))|^{2+\delta(t)}\right]^{\frac{1}{2+\delta(t)}} &= \EE\left[|X_n(t)f'(\tilde{X}(t)) + f''(\tilde{X}(t))|^{2+\delta(t)}\right]^{\frac{1}{2+\delta(t)}} \\
			&=\EE_{\tilde{X}(t)}\left[\EE_{X_n(t)}\left[|X_n(t)f'(\tilde{X}(t)) + f''(\tilde{X}(t))|^{2+\delta(t)}\right]\right]^{\frac{1}{2+\delta(t)}}\\
			&\leq \EE_{\tilde{X}(t)}\left[\EE_{X_n(\infty)}\left[|X_n(\infty)f'(\tilde{X}(t)) + f''(\tilde{X}(t))|^{2}\right]^{\frac{2+\delta(t)}{2}}\right]^{\frac{1}{2+\delta(t)}}\\
			&\leq \EE_{X_n(\infty)}\left[\EE_{\tilde{X}(t)}\left[|X_n(\infty)f'(\tilde{X}(t)) + f''(\tilde{X}(t))|^{2 +\delta(t)}\right]^{\frac{2}{2+\delta(t)}}\right]^{\frac{1}{2}}\\
			&\leq \EE_{X_n(\infty)}\left[\EE_{\tilde{X}_\infty}\left[|X_n(\infty)f'(\tilde{X}(\infty)) + f''(\tilde{X}(\infty))|^{2 }\right]\right]^{\frac{1}{2}}\\
			&= \EE\left[|f(X(\infty))|^2\right]^\frac{1}{2}.
		\end{align*}
		The first inequality uses the induction hypothesis on the uni-variate affine function, $X_n(t)f'(\tilde{X}(t)) + f''(\tilde{X}(t))$, when $\tilde{X}(t)$ is fixed. Similarly, the third inequality is the induction hypothesis applied to the multi-linear function $X_n(\infty)f'(\tilde{X}(t)) + f''(\tilde{X}(t))$, when $X_n(\infty)$ is fixed. The second inequality is Minkowski's integral inequality.
		\paragraph{A one dimensional inequality:}
		We now prove the base case of the induction when $n=1$. Let $f:[-1,1] \to \RR$ be multi-linear. We first assume that $f(x) \geq 0$ for every $x \in [-1,1]$. Hence $f(x) = ax+b$, for some $a,b \in \RR$ with $|a| \leq b$. By re-scaling, we may assume $b=1$ and $|a| < 1$.
		Thus, we are interested in bounding $\EE\left[|aX(t)+1|^{2+\delta(t)}\right]$. Taking a third order Taylor approximation  of $x \to (1+x)^{2+\delta(t)}$ and using the bound
		$$\frac{d^4}{dx^4}(1+x)^{2+\delta(t)} \leq 0,$$
		when $x > -1$ and $\delta(t) \leq 1$, we get,
		\begin{align*}
			\EE&\left[|aX(t)+1|^{2+\delta(t)}\right] \\
			&\leq 1 + a(2+\delta(t))\EE[X(t)] + a^2\frac{(2+\delta(t))(1+\delta(t))}{2}\EE[X(t)^2]\\
			&\ \ \  + a^3\frac{(2+\delta(t))(1+\delta(t))\delta(t)}{6}\EE[X(t)^3]\\
			&= 1 + a^2\frac{(2+\delta(t))(1+\delta(t))}{2}(1-e^{-t}).
		\end{align*}
		The last identity uses the fact that $X(t)$ is symmetric as well as Lemma \ref{lem:noisetonoise}.
		Since $\delta(t) = e^{-t}$, we get,
		\begin{align*}
			\EE\left[|aX(t)+1|^{2+\delta(t)}\right]^{\frac{2}{2+\delta(t)}} &\leq (1 + a^2\frac{(2+\delta(t))(1+\delta(t))}{2}(1-e^{-t}))^{\frac{2}{2+\delta(t)}}\\
			&\leq 1 + a^2(1+\delta(t))(1-e^{-t}) = 1+(1 -e^{-2t})a^2\\
			&\leq 1+a^2 = \EE\left[(1+aX(\infty))^2\right].
		\end{align*}
		Which finishes the proof when $f$ is non-negative. For general $f$, consider the non-negative affine function $g:[-1,1]\to \RR$ defined by $g(-1) = |f(-1)|$ and $g(1) = |f(1)|$. By Jensen's inequality, we have $|f(x)| \leq |g(x)|$, for every $x \in [-1,1]$, indeed, since $f$ and $g$ are affine
		$$|f(x)| = \left|\frac{f(-1)(1-x) + f(1)(1+x)}{2} \right| \leq \frac{|f(-1)|(1-x) + |f(1)|(1+x)}{2} =|g(x)|.$$
		Thus,
		$$\EE\left[|f(X(t))|^{2+\delta(t)}\right]^{\frac{1}{2+\delta(t)}} \leq \EE\left[|g(X(t))|^{2+\delta(t)}\right]^{\frac{1}{2+\delta(t)}}\leq \EE\left[|g(X(\infty))|^{2}\right]^{\frac{1}{2}} = \EE\left[|f(X(\infty))|^{2}\right]^{\frac{1}{2}}.$$
		The second inequality holds since $g$ is non-negative and the last inequality follows from the fact that $X(\infty) \in \{-1,1\}$ almost surely. 
	\end{proof}

  \newpage
  
	\bibliographystyle{plain}
	\bibliography{bib}{}

\end{document}